\newcommand{\donothing}[1]{{}}
\numberwithin{equation}{section}
\newtheorem{theorem}{Theorem}[section]
\newtheorem{corollary}[theorem]{Corollary}
\newtheorem{lemma}[theorem]{Lemma}
\newtheorem{conjecture}[theorem]{Conjecture}
\newenvironment{proof}{{\bf Proof}.\ }{ \hfill $\square$}
\newcommand{\bke}[1]{\left ( #1 \right )}
\newcommand{\norm}[1]{\left \| #1 \right \|}
\newcommand{\bka}[1]{{\langle #1 \rangle}}
\def\be{\beta}
\def\de{\delta}
\def\ve{\varepsilon}
\def\e {\varepsilon}
\def\th{\theta}
\def\la{\lambda}
\def\si{\sigma}
\def\ph{\varphi} %
\def\De{\Delta}
\def\Om{\Omega}
\newcommand{\R}{\mathbb{R}}
\renewcommand{\div}{\mathop{\rm div}}
\newcommand{\supp} {\mathop{\mathrm{supp}}}
\newcommand{\pd}{\partial}
\newcommand{\nb}{\nabla}
\newcommand{\td}{\tilde}
\newcommand{\lec}{{\ \lesssim \ }}
\newcommand{\bs}{\backslash\, }
\newcommand{\mz}{\backslash \{0\}}
\begin{document}

\title{Point singularities of 3D stationary Navier-Stokes flows}

\author{Hideyuki Miura%
\thanks{Department of Mathematics, Osaka University,
Osaka, Japan. Email: miura@math.sci.osaka-u.ac.jp}, \quad Tai-Peng Tsai%
\thanks{Department of Mathematics, University of British Columbia,
Vancouver, BC V6T 1Z2, Canada. Email: ttsai@math.ubc.ca}}

\date{May 25, 2009}

\maketitle

\begin{abstract}

This article characterizes the singularities of very weak solutions of
 3D stationary Navier-Stokes equations in a punctured ball which are
sufficiently small in weak $L^3$.

{\it Keywords}: stationary Navier-Stokes equations, point singularity,
very weak solution, Landau solution.
\end{abstract}

\section{Introduction}

We consider point singularities of very weak solutions of the 3D
stationary Navier-Stokes equations in a finite region $\Om$ in $\R^3$. The
Navier-Stokes equations for the velocity $u : \Om\to \R^3$ and pressure
$p : \Om \to \R$ with external force $f : \Om \to \R^3$ are
\begin{equation}
\label{NS}
 -\De u + (u \cdot \nb) u  + \nb p =f, \quad \div u = 0, \quad (x \in
\Om).
\end{equation}
A {\it very weak solution} is a vector function $u$ in
$L^2_{loc}(\Om)$ which satisfies \eqref{NS} in distribution sense:
\begin{equation}
  \int- u \cdot \De \ph + u_j u_i \pd_j \ph_i = \bka{f,\ph}, \quad
  \forall
\ph \in C^\infty_{c,\si} (\Om),
\end{equation}
and $\int u \cdot \nb h = 0$ for any $h \in C^\infty_c(\Om)$.
Here the force $f$ is allowed to be a distribution and
\begin{equation}
C^\infty_{c,\si} (\Om) = \{ \ph \in C^\infty_{c} (\Om,\R^3) : \div \ph=0\}.
\end{equation}
In this definition the pressure is not needed.
Denote $B_R = \{ x \in \R^3: |x|<R\}$ and $B_R^c = \R^3 \bs
B_R$ for $R>0$.

We are concerned with the behavior of very weak solutions which
solve \eqref{NS} in the punctured ball $B_2 \mz$ with zero force,
i.e., $f=0$.
There are a lot of studies on this problem \cite{Dyer-Edmunds,
%Ladyzhenskaya,
Sh1,Sh2,Choe-Kim,Kim-Kozono}. A typical result is to show that, under
some conditions, the solution is a very weak solution across the
origin without singular forcing supported at the origin (removable
singularity), and is regular, i.e., locally bounded, under possibly
more assumptions (regularity).  Dyer-Edmunds \cite{Dyer-Edmunds}
proved removable singularity and regularity assuming both $u,p \in
L^{3+\ve} (B_2)$ for some $\ve>0$. Shapiro \cite{Sh1,Sh2} proved
removable singularity and regularity assuming $u \in L^{3+\ve} (B_2)$
for some $\ve>0$ and $u(x) = o(|x|^{-1})$ as $x \to 0$, without
assumption on $p$. Choe and Kim \cite{Choe-Kim} proved removable
singularity assuming $u \in L^{3} (B_2)$ {\it or} $u(x) = o(|x|^{-1})$
as $x \to 0$, and regularity assuming $u \in L^{3+\ve} (B_2)$ for some
$\ve>0$. %Their proof is based on hydrodynamic
%potentials and homogeneous harmonic polynomials.
Kim and Kozono \cite{Kim-Kozono} recently proved removable
singularity under the same assumptions as \cite{Choe-Kim}, and
regularity assuming $u \in L^3(B_2)$ or $u$ is small in 
weak $L^3$. As mentioned in \cite{Kim-Kozono}, their
result is optimal in the sense that if their assumption is replaced
by
\begin{equation}
\label{u-bound}
|u(x)| \le C_* |x|^{-1}
\end{equation}
for $0<|x|<2$, then the singularity is not removable in general, due
to the existence of $\textit{Landau solutions}$, which is the family
of explicit singular solutions calculated by L. D. Landau  in 1944
\cite{Landau}, and can be found in standard textbooks, see e.g.,
\cite[p. 82]{LL} or \cite[p. 206]{Batchelor}.

The purpose of this article is to characterize the singularity
and to identify the leading order behavior of very weak solutions
satisfying the threshold assumption (\ref{u-bound})
when the constant $C_*$ is sufficiently small.
We show that it is given by Landau solutions.

We now recall Landau solutions in order to state our main theorems.
Landau solutions can be parametrized by vectors $b \in \R^3$ in the
following way: For each $b\in \R^3$ there exists a unique
($-1$)-homogeneous solution $U^b$ of \eqref{NS} together with an
associated pressure $P^b$ which is ($-2$)-homogeneous, such that
$U^b, P^b$ are smooth in $\R^3 \mz$ and they solve
\begin{equation}
\label{u-eq} -\De u + (u \cdot \nb) u + \nb p =b\de, \quad \div u =
0,
\end{equation}
in $\R^3$ in the sense of distributions, where
$\de$ denotes the Dirac $\delta$ function.
When $b=(0,0,\be)$ with $\be\ge 0$,
they have the following explicit formulas in spherical coordinates
$r,\th,\phi$ with $x=(r\sin \th \cos \phi, \ r \sin \th \sin \phi, \
r \cos \th)$:
\begin{equation}
 U = \frac 2r\bke{\frac {A^2-1}{(A-\cos \th)^2} -1} e_r - \frac
 {2\sin \th} {r(A-\cos \th)} e_\th, \quad
P =\frac {-4(A\cos \th -1)}{r^2(A-\cos \th)^2}
\end{equation}
where $e_r = \frac x{r}$ and $e_\th = (\cos \th \cos \phi, \ \cos \th
\sin \phi, \ -\sin \th)$. The parameters $\be \ge 0$ and $A \in
(1,\infty]$ are related by the formula
\begin{equation}
\be = 16 \pi \bke{A + \frac 12 A^2 \log \frac {A-1}{A+1} + \frac
  {4A}{3(A^2-1)}}.
\end{equation}
The formulas for general $b$ can be obtained from rotation.
One checks directly that $\norm{rU^b}_{L^\infty}$ is monotone in $|b|$ and
$\norm{rU^b}_{L^\infty} \to 0$ (or $\infty$) as $|b| \to 0$ (or
$\infty$).
Recently Sverak \cite{Sverak} proved that Landau solutions are
the only solutions of \eqref{NS} in $\R^3 \mz$
which are smooth and ($-1$)-homogeneous in $\R^3 \mz$, without
assuming axisymmetry.
See also \cite{TX,Cannone,Korolev-Sverak} for related results.

If $u, p$ is a solution of (1.1), we will denote by
\begin{equation}
T_{ij}(u,p)= p\de_{ij} + u_i u_j - \pd_i u_j - \pd_j u_i
\end{equation}
the momentum flux density tensor in the fluid,
which plays an important role to determine the equation
for $(u,p)$ at 0.
Our main result is the following.

\begin{theorem}
\label{theorem1} For any $q\in (1,3)$, there is a small
$C_*=C_*(q)>0$ such that, if $u$ is a very weak solution of
\eqref{NS} with zero force in $B_2 \mz$ satisfying \eqref{u-bound}
in $B_2 \mz$, then there is a scalar function $p$ satisfying
$|p(x)|\le C|x|^{-2}$, unique up to a constant, so that $(u,p)$
satisfies \eqref{u-eq} in $B_2$ with $b_i = \int_{|x|=1} T_{ij}(u,p)
n_j(x)$, and
\begin{equation}
\label{thm1-1}
\norm{u - U^b}_{ W^{1,q}(B_1)} + \sup _{x \in B_1} |x|^{3/q-1} |(u -
U^b)(x)| \le
C C_*,
\end{equation}
where the constant $C$ is independent of $q$ and $u$.
\end{theorem}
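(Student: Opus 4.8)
\medskip
\noindent\textbf{Overall plan; Step 1 (scale-invariant regularity).} The plan is threefold: (i) upgrade \eqref{u-bound} to full scale-invariant regularity; (ii) extract the point force $b$ and a pressure $p$; and (iii) show that $w:=u-U^b$ solves a Stokes system with \emph{no} point force and with coefficients of size $O(C_*)$, then close \eqref{thm1-1} by a fixed-point argument in a scale-invariant space. For (i), I would work on dyadic annuli $A_R=\bke{R<\abs{x}<2R}$, $0<R<1$: rescaling $v(y)=Ru(Ry)$ gives $\norm{v}_{L^\infty(A_1)}\le 2C_*$ and $v$ solves \eqref{NS} with zero force on $A_1$, so small-data interior theory for the stationary Navier--Stokes equations ($\ve$-regularity, equivalently interior $L^p$/Schauder estimates for the Stokes system after the standard bootstrap that uses $\div v=0$ to recover the pressure) gives $\norm{\nb^k v}_{L^\infty}\le C_kC_*$ and $\osc\,\pi\le CC_*$ on the middle annulus. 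Undoing the scaling: $u$ is smooth in $B_2\mz$; by the standard pressure-recovery argument (de Rham's lemma) on the connected set $B_2\mz$ there is a pressure $p$, unique up to a constant; and, after normalizing that constant (e.g.\ so that $p$ has mean $0$ on $\pd B_1$), one obtains $\abs{\nb^k u(x)}\le C_kC_*\abs{x}^{-1-k}$ and $\abs{\nb^k p(x)}\le C_kC_*\abs{x}^{-2-k}$ for all $k\ge 0$, in particular $\abs{p(x)}\le C\abs{x}^{-2}$.

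\medskip
\noindent\textbf{Step 2: the point force and the equation on $B_2$.} Next I would observe that a direct computation from \eqref{NS} and $\div u=0$ gives $\pd_jT_{ij}(u,p)=0$ in $B_2\mz$, so $R\mapsto\int_{\abs{x}=R}T_{ij}(u,p)n_j\,dS$ is independent of $R\in(0,2)$; call its value $b_i$. On $\abs{x}=1$ the $\nb u$ and $u\otimes u$ contributions to $T$ are $O(C_*)$, and the pressure contribution is $\int_{\abs{x}=1}\bke{p-\bar p}n_i\,dS=O(\osc\,p)=O(C_*)$, hence $\abs{b}\le CC_*$. Testing the very weak formulation against $\ph\in C^\infty_{c,\si}(B_2)$ which need not vanish at $0$, and integrating by parts over $B_2\setminus B_\ep$, leaves one boundary integral over $\abs{x}=\ep$; expanding $\ph_i(x)=\ph_i(0)+O(\abs{x})$ and using $\abs{x_k\,T_{ij}(u,p)}\le C\abs{x}^{-1}$ shows it converges to $b_i\ph_i(0)$. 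Thus $(u,p)$ solves \eqref{u-eq} in $B_2$ with right-hand side exactly $b\de$ --- no $\nb\de$ or higher-order terms --- and $\abs{b}\le CC_*$; consequently $U^b$ obeys $\abs{\nb^kU^b(x)}\le C_kC_*\abs{x}^{-1-k}$ (by smallness of $\norm{rU^b}_{L^\infty}$ for small $\abs{b}$, together with homogeneity), and on $B_2$ it solves \eqref{u-eq} with the same $b\de$.

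\medskip
\noindent\textbf{Step 3: the perturbation equation and the fixed point.} Setting $w=u-U^b$, $\pi=p-P^b$ and subtracting the two copies of \eqref{u-eq} (using $\div u=\div U^b=0$), I get
\begin{equation}
 -\De w+\nb\pi=-\div\bke{u\otimes w+w\otimes U^b}=:\mathcal Bw\quad\text{in }\mathcal D'(B_2),\qquad\div w=0 ,
\end{equation}
where the two point forces \emph{cancel}, so there is no Dirac mass at $0$; moreover $\abs{w(x)}\le CC_*\abs{x}^{-1}$, and $g:=u\otimes w+w\otimes U^b$ satisfies $\abs{g(x)}\le CC_*^2\abs{x}^{-2}$. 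I would fix $q\in(1,3)$ and work in a scale-invariant space $X$ of divergence-free fields on $B_1$ adapted to homogeneity $1-3/q$, whose norm dominates both quantities on the left of \eqref{thm1-1}, writing $w=h+\mathcal S_0(\mathcal Bw)$: here $h$ solves the homogeneous Stokes system in $B_1$ with Dirichlet data $w|_{\pd B_1}$ (smooth, of size $O(C_*)$, so $h$ is smooth up to $0$ with $\norm{h}_X\le CC_*$, $C$ absolute), and $\mathcal S_0$ solves the Stokes system for a forcing in divergence form, realized by $\mathcal S_0(\div g)=\nb\Phi\ast(g\,\mathbf{1}_{B_1})$ plus a smooth corrector, $\Phi$ being the Stokes fundamental solution. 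Because $g$ enters in divergence form, $\mathcal S_0(\mathcal Bw)$ contains no Stokeslet --- this is exactly where the absence of a point force is used --- and the associated subcritical Stokes estimate together with $\abs{u},\abs{U^b}\le CC_*\abs{x}^{-1}$ yields $\norm{\mathcal S_0(\mathcal Bw)}_X\le CC_*\norm{w}_X$. Choosing $C_*=C_*(q)$ so small that $CC_*<\tfrac12$, the map $w'\mapsto h+\mathcal S_0(\mathcal Bw')$ is a contraction on $X$; its fixed point is the unique small solution, it coincides with $w$, and $\norm{w}_X\le 2\norm{h}_X\le CC_*$. Unwinding $\norm{\cdot}_X$ gives \eqref{thm1-1}, with $C$ independent of $q$ because all $q$-dependence has been pushed into the threshold $C_*(q)$.

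\medskip
\noindent\textbf{Expected main obstacle.} The crux is Step 3, and the difficulty is that the problem is scale-\emph{critical}: $u$, $U^b$ and $w$ all naturally sit at the $\abs{x}^{-1}$ (weak-$L^3$) scale, which is precisely where the linear Stokes estimates lose a logarithm, so a naive iteration there does not close --- one must show $w$ decays \emph{strictly} faster than $\abs{x}^{-1}$. Two points require care. First, $b_i=\int_{\abs{x}=1}T_{ij}(u,p)n_j$ is the unique choice that removes the Stokeslet, so $\mathcal Bw$ has zero net force and the Stokes-potential part of $w$ is $o(\abs{x}^{-1})$; this is also why the boundary-layer expansion in Step 2 must be carried to first order, to be sure no $\nb\de$ is hidden at $0$. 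Second, the perturbed-Stokes solution operator must be bounded on the slightly subcritical space $X$ with operator norm $O(C_*)$, which genuinely uses that the weak-$L^3$ norm of $u$ and $\abs{b}$ are $O(C_*)$ rather than merely finite, and which forces the threshold $C_*=C_*(q)\to 0$ as $q\uparrow 3$. Everything else --- the bootstrap in Step 1, the flux identity and boundary expansion in Step 2 --- is routine.
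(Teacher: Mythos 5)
Your Steps 1 and 2 reproduce the paper's Lemma \ref{lemma3} almost verbatim (rescaling on annuli plus interior Stokes estimates to get $|\nabla^k u|\le C_kC_*|x|^{-k-1}$, pressure recovery on the punctured ball, and the flux identity for $T_{ij}$ showing the right-hand side is exactly $b\de$), and your Step 3 follows the same overall strategy: subtract $U^b$ so the Dirac masses cancel, then close a perturbation argument for $w$ by contraction. The functional framework differs: the paper localizes $w$ by a cutoff and a Bogovskii corrector and contracts in $W^{1,r}_0(B_2)$ for $3/2\le r<3$, using O'Neil's Lorentz-space H\"older inequality together with $W^{1,r}\hookrightarrow L^{3r/(3-r),r}$ (estimate \eqref{lemma1-1}), and only afterwards derives the pointwise bound by interior estimates and Gagliardo--Nirenberg; you contract directly in a weighted scale-invariant space on $B_1$ via the Stokes fundamental solution. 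That variant is viable, but as written it has one genuine gap and one soft spot.

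The gap is the assertion that the fixed point ``coincides with $w$.'' Contraction in $X$ gives uniqueness only among fields already known to lie in $X$, i.e.\ already decaying like $|x|^{1-3/q}$; the actual $w$ is a priori only $O(C_*|x|^{-1})$, which is precisely the critical rate you yourself identify as the crux. To identify the constructed subcritical solution with $w$ you need a separate uniqueness statement in a class large enough to contain the a priori $w$ (small $L^3_{wk}$, or $|x|\,|v|\in L^\infty$). This is exactly the paper's Lemma \ref{lem3.2}, proved in $W^{1,r}_0\cap L^3_{wk}$ for $1<r<3/2$, and it is the step that turns the existence result into a regularity statement about $u$. It can be supplied in your framework (the potential estimate for the difference equation at the critical weight $|x|^{-2}$ forcing produces $|x|^{-1}$ with no logarithm, and the absence of a homogeneous Stokes part follows since the difference lies in $W^{1,r}_{loc}$ across the origin), but it is a missing idea, not a routine detail. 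The soft spot: if $\norm{\cdot}_X$ is to dominate the $W^{1,q}(B_1)$ half of \eqref{thm1-1}, note that the pointwise product bound $|u\otimes w+w\otimes U^b|\lec C_*\norm{w}_X|x|^{-3/q}$ just fails to put the forcing in $L^q$ (since $\int_{B_1}|x|^{-3}\,dx$ diverges), so the contraction does not close for the $W^{1,q}$ component from pointwise information alone; you need either the Lorentz-space refinement of Lemma \ref{lemma1}, or to run the weighted sup-norm argument at a strictly better exponent $q'\in(q,3)$ and recover $W^{1,q}$ a posteriori.
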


The exponent $q$ can be regarded as the degree of
the approximation of $u$ by $U^b$.
The closer $q$ gets to $3$, the less singular $u-U^b$ is.
But in our theorem,
$C_*(q)$ shrinks to zero as $q \to 3_-$.  Ideally, one would like to
prove that $u - U^b \in L^\infty$. However, it seems quite subtle in
view of the following model equation for a scalar function,
\begin{equation}
-\De v + c v =0, \quad c = \De v /v.
\end{equation}
If we choose $v = \log |x|$, then $c(x) \in L^{3/2}$ and $\lim_{|x|\to
0} |x|^2 |c(x)|=0$, but $v\not \in L^\infty$. In equation \eqref{NSw}
for the difference $w=u-U^b$, there is a term $(w \cdot \nb) U^b$ which
has similar behavior as $cv$ above.

In fact, we have the following stronger result. Denote by $L^r_{wk}$
the weak $L^r$ spaces. We claim the same conclusion as in Theorem
\ref{theorem1} assuming only a small $L^3_{wk}$ bound of $u$ but not
the pointwise bound \eqref{u-bound}.
\begin{theorem}
\label{theorem2} There is a small $\e_*>0$ such that, if $u$ is a
very weak solution of \eqref{NS} with zero force in $\Om=B_{2.1}
\mz$ satisfying $\norm{u}_{L^3_{wk}(\Om)}=:\e \le \e_*$, then $u$
satisfies $|u(x)| \le C_1 \e |x|^{-1}$ in $B_{2}\mz$ for some $C_1$.
Thus the conclusion of Theorem \ref{theorem1} holds if $C_1 \e \le
C_*(q)$.
\end{theorem}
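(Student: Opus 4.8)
The goal is to upgrade a small $L^3_{wk}$ bound on $u$ to the pointwise bound $|u(x)| \le C_1\e|x|^{-1}$, which is precisely hypothesis \eqref{u-bound} of Theorem \ref{theorem1}; once that is established the conclusion follows by applying Theorem \ref{theorem1}. The natural strategy is a scaling-invariant bootstrap argument exploiting the criticality of $L^3_{wk}$ for \eqref{NS}. For each dyadic annulus $A_k = \{2^{-k-1} < |x| < 2^{-k}\}$ (and similarly for the annuli going out to radius $2$), I would rescale $u$ by $u_k(y) = 2^{-k} u(2^{-k} y)$, which maps a neighborhood of $A_k$ to a fixed annulus $\{1/2 < |y| < 1\}$ thickened slightly, and which preserves the $L^3_{wk}$ norm: $\norm{u_k}_{L^3_{wk}} = \norm{u}_{L^3_{wk}(\text{annulus})} \le \e$. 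Since $u_k$ is a very weak solution of \eqref{NS} with zero force on the thickened fixed annulus, the task reduces to a fixed-scale \emph{local} regularity statement: a very weak solution with zero force and sufficiently small $L^3_{wk}$ norm on $\{1/2 < |y| < 2\}$ (say) is bounded on $\{3/4 < |y| < 3/2\}$ with $\norm{u_k}_{L^\infty} \le C\e$. Undoing the scaling gives $|u(x)| \lesssim \e\, 2^{k} \sim \e|x|^{-1}$ on $A_k$, and summing over all the annuli covering $B_2\mz$ yields the claim.

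The fixed-scale interior regularity estimate is the technical heart. Here I would proceed by the standard elliptic bootstrap for the stationary Navier-Stokes system. First, reconstruct a local pressure $p_k$ from $u_k$ by solving $-\Delta p_k = \partial_i\partial_j(u_{k,i} u_{k,j})$ (Calder\'on--Zygmund), noting $u_k \otimes u_k \in L^{3/2}_{wk}$ with small norm. Then treat \eqref{NS} as a Stokes system $-\Delta u_k + \nabla p_k = -\div(u_k \otimes u_k) =: F$ with $F$ in a negative-order space built on $L^{3/2}_{wk}$; local Stokes regularity gains two derivatives, placing $u_k$ in an improved space on a slightly smaller annulus. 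Iterating — each step the nonlinearity $u_k\otimes u_k$ lands in a better space because $u_k$ does, and smallness keeps the quadratic term from spoiling the gain — one climbs the ladder $L^3_{wk} \to L^{3+} \to \cdots \to L^\infty$, and indeed to $C^\infty$ in the interior by iterating once $u_k$ is, say, bounded. Keeping track of constants through the iteration gives the linear-in-$\e$ bound $\norm{u_k}_{L^\infty(\text{smaller annulus})} \le C\e$; this is where smallness of $\e_*$ is used decisively, both to run the fixed point / Calder\'on--Zygmund estimates with quadratic nonlinearity and to ensure the constant $C$ in the final bound is uniform and does not compound over the iteration.

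The main obstacle I anticipate is \emph{not} any single step of the bootstrap but rather the uniformity of all constants across the infinitely many dyadic scales, together with handling the endpoint/weak-type spaces cleanly. Working in $L^3_{wk} = L^{3,\infty}$ (rather than $L^3$) means the quadratic map $u \mapsto u\otimes u$ only lands in $L^{3/2,\infty}$, a non-reflexive Lorentz space, so the Calder\'on--Zygmund and Stokes estimates must be invoked in their Lorentz-space form, and one must take a little care that the very-weak-solution formulation is compatible with multiplying by cutoffs (the pressure, absent from the definition, has to be introduced by hand and shown to be consistent across overlapping annuli, up to additive constants). The scaling invariance of both the equation and the $L^3_{wk}$ norm is exactly what makes the per-annulus constants identical, so once the fixed-scale lemma is proven with an explicit constant the summation is routine; but assembling a single global pressure (matching the additive constants on overlaps) and converting the annulus-wise $L^\infty$ bounds into the clean statement $|u(x)| \le C_1\e|x|^{-1}$ is the bookkeeping one must get right.
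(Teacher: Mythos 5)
Your overall strategy coincides with the paper's: for each point $x_0$ (the paper uses the ball $B(x_0,2\la)$ with $\la=\min(0.1,|x_0|)/2$ rather than dyadic annuli, a cosmetic difference), rescale by $v(y)=\la u(\la y+x_0)$ so that the scale-invariant smallness $\norm{v}_{L^3_{wk}(B_2)}\le\e$ is preserved, invoke a fixed-scale interior regularity statement giving $\norm{v}_{L^\infty(B_1)}\le C_2\e$, and undo the scaling to get $|u(x_0)|\le C_2\e\la^{-1}\lec \e|x_0|^{-1}$. The paper, however, does not prove the fixed-scale statement: it cites it directly from Kim--Kozono, whose regularity theorem for very weak solutions small in $L^3_{wk}$ is exactly what is needed. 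No global pressure assembly is required, since the bound is obtained point by point.

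The gap is in your sketch of that fixed-scale lemma. The ladder $L^3_{wk}\to L^{3+}\to\cdots$ cannot start by treating \eqref{NS} as a Stokes system with forcing $\div(u\otimes u)$, $u\otimes u\in L^{3/2,\infty}$: the Stokes estimate then returns $u\in W^{1,(3/2,\infty)}_{loc}\hookrightarrow L^{3,\infty}$, i.e., no gain at all --- this is precisely the criticality of $L^3_{wk}$ that the paper emphasizes (``the $L^3_{wk}$-quasi-norm \dots does not become smaller when restricted to smaller regions''). The correct first step is not a linear bootstrap but a smallness-based nonlinear argument with \emph{two} components: (a) construct, by contraction in $W^{1,r}_0$ for some $r<3$ using the bilinear estimate \eqref{lemma1-1}, a subcritical solution of the localized equation, and (b) prove that the original very weak solution \emph{coincides} with it, via a uniqueness theorem in $W^{1,r}_0\cap L^3_{wk}$ for small data. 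Step (b) is indispensable --- building a nice solution by fixed point says nothing about $u$ itself without it --- and it is absent from your outline; this existence-plus-uniqueness structure is exactly that of Lemmas \ref{lem3.1}--\ref{lem3.2} in the paper (and of the Kim--Kozono proof). Once $u\in W^{1,r}$ for $r$ close to $3$, hence $u\in L^p$ for large finite $p$, your supercritical bootstrap to $L^\infty$ does go through. So the architecture is right, but the decisive step of escaping the critical space needs the fixed-point/uniqueness mechanism rather than iterated linear estimates, and mentioning ``fixed point'' in passing does not supply it.
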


%The problem considered here can be considered as an extension of two
%types of results: {\it Removable singularity problem} considers when a
%solution in $B_2 \mz$ is a very weak solution in $B_2$, and {\it
%regularity problem} considers when a very weak solution in $B_2$ is
%regular, i.e., if it is locally bounded.  Various criteria to these
%two problems, in the case $b =0$, have been studied by many
%authors. We summarize these criteria in the following
%table. ($L^3_{wk}$ denotes weak-$L^3$.)

%\medskip

%
%\newcommand{\higher}{\raisebox{-8pt}{\rule{0pt}{24pt}}}
%\begin{center}
%\begin{tabular}{|l|c|c|} \hline
%Researchers & Removable singularity & Regularity \higher \\ \hline
%Ladyzhenskaya \cite{Ladyzhenskaya} & & $u \in W^{1,2}$ \higher \\
%\hline
%
%Dyer-Edmunds \cite{Dyer-Edmunds} &  $u,p \in L^{3+}$ & same as left \higher \\ %\hline
%Shapiro \cite{Sh1,Sh2} & $u=o(|x|^{-1})$ and $u \in L^{3+}$ & same as left \hig%her \\ \hline
%
%Choe-Kim \cite{Choe-Kim} & $u=o(|x|^{-1})$ or $u \in L^3$ & $u \in
%L^{3+}$ \higher \\ \hline
% Kim-Kozono \cite{Kim-Kozono} &  $u=o(|x|^{-1})$ or $u\in L^3$  & $u\in L^3$ or% small
% in $L^3_{wk}$ \higher \\ \hline
%\end{tabular}
%\end{center}

%\medskip

%{\it Remark}. Assuming $u \in L^3$, its regularity is also shown by
%Sverak-Tsai \cite[Ch.4]{Tsai} by first showing $u \in W^{1,3/2}$ using
%Lemma \ref{lemma2}, and then by applying to the localized solution
%\cite[Ch.8]{Galdi-2} Lemma 5.1, which gives $W^{2,3/2}$-existence and
%$W^{1,3/2}$-uniqueness for Stokes system with convection coefficient
%in $L^3$.
%
%

Our results are closely related to the \textit{regularity problem} of
very weak solutions, which  could be considered when $u$ is only
assumed to be in $L^2_{loc}$. In fact, the problem with the
assumption $u$ being large in $L^3_{wk}$ already exhibits a great
difficulty. Recall the scaling property of \eqref{NS}: If $(u,p)$ is
a solution of \eqref{NS}, then so is
\begin{equation}
\label{rescaled}
(u_\la,p_\la)(x) = (\la u(\la x), \la^2 p(\la x)), \quad (\la >0).
\end{equation}
The known methods are primarily perturbation arguments.  Since
$L^3_{wk}$-quasi-norm is invariant under the above scaling and does not
become smaller when restricted to smaller regions, one would need to
exploit the structure of the Navier-Stokes equations in order to get a
positive answer. Compare the recent result \cite{CSTY} on axisymmetric
solutions of nonstationary Navier-Stokes equations, which also
considers a borderline case under the natural scaling.

This work is inspired by Korolev-Sverak \cite{Korolev-Sverak} in which
they study the asymptotic as $|x| \to \infty$ of solutions of
\eqref{NS} satisfying \eqref{u-bound} in $\R^3 \bs B_1$. They show
that the leading behavior is also given by Landau solutions if $C_*$
is sufficiently small.  Our theorem can be considered as a dual
version of their result.  However, their proof is based on the unique
existence in $\R^3$ of the equation for $v=\ph (u-U^b)+\zeta$ where
$\ph$ is a cut-off function supported near infinity and $\zeta$ is a
suitable function chosen to make $\div v=0$. If one tries the same
approach for our problem, since one needs to remove the origin as well
as the region $|x|\ge 2$ while extending $u-U^b$, one needs to choose
a sequence $\ph_k$ with the supports of $1-\ph_k$ shrinking to the
origin, which produce very singular force terms near the origin.

Instead, we first prove Lemma \ref{lemma3} which gives the equation
for $(u,p)$ near the origin. Since the equation for $u$ is same as
$U^b$ near the origin for $b=b(u)$, the $\delta$-functions at the
origin cancel in the equation for their difference. We then apply the
approach of Kim-Kozono \cite{Kim-Kozono} to the difference equation,
and prove its unique existence  in $W^{1,r}_0 (B_2)$ for $3/2\le
r<3$ and uniqueness in $W^{1,r}_0\cap L^3_{wk}(B_2)$ for $1< r<3/2$,
which improves the regularity of the original difference.
Above $W^{1,r}_0(B_2)$ is the closure of $C^{\infty}_c (B_2)$ in the
$W^{1,r}(B_2)$-norm.

As an application, we give the following corollary. Recall $u_\la$
for $\la >0$ is defined in \eqref{rescaled}. A solution $u$ on $B_2
\mz$ is called {\it discretely self-similar} if there is a $\la_1
\in (0,1)$ so that $u_{\la_1}=u$. Such a solution is completely
determined by its values in the annulus $B_1 \bs B_{\la_1}$ since
$u(\la_1 ^k x) = \la_1^{-k}u(x)$. They contain minus-one homogeneous
solutions as a special subclass.

\begin{corollary}
If $u$ satisfies the assumptions of Theorem \ref{theorem1} and
furthermore $u$ is discretely self-similar in $B_2\mz$, then
$u\equiv U^b$.
\end{corollary}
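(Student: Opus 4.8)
The plan is to combine the quantitative closeness of $u$ to a Landau solution provided by Theorem~\ref{theorem1} with the scaling invariance of the equation and the discrete self-similarity of $u$, exploiting that the Landau solution $U^b$ is itself exactly $(-1)$-homogeneous and hence discretely self-similar for every $\la_1$. Concretely, suppose $u_{\la_1}=u$ for some $\la_1\in(0,1)$. First I would note that, since $u$ is $(-1)$-homogeneous up to the discrete scaling, the bound \eqref{u-bound} propagates to all of $B_2\mz$ with the same constant, so Theorem~\ref{theorem1} applies and yields $b=b(u)$ and the estimate \eqref{thm1-1} for $w:=u-U^b$ on $B_1$.

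The key step is to iterate the estimate using scaling. Because $u_{\la_1}=u$ and $(U^b)_{\la_1}=U^b$ (as $U^b$ is $(-1)$-homogeneous), we have $w_{\la_1}=w$; more generally $w(\la_1^k x)=\la_1^{-k}w(x)$ for all $k\in\Z$. Now fix $x$ with $|x|<1$ and choose $k\ge 0$ so that $\la_1^k x \in B_1\setminus B_{\la_1}$ roughly; applying the pointwise part of \eqref{thm1-1}, namely $\sup_{y\in B_1}|y|^{3/q-1}|w(y)|\le CC_*$, at the point $y=\la_1^k x$ gives $|w(\la_1^k x)|\le CC_*|\la_1^k x|^{1-3/q}$. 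Using $w(\la_1^k x)=\la_1^{-k}w(x)$ and $|\la_1^k x|=\la_1^k|x|$, this rearranges to $|w(x)|\le CC_*\,\la_1^{k(2-3/q)}|x|^{1-3/q}$. Since $q<3$ we have $2-3/q>0$; letting $k\to\infty$ (which is legitimate because the estimate holds for every $k$ for which $\la_1^k x$ stays in the valid range — and by self-similarity we can always rescale $x$ into that range) forces $w(x)=0$. Hence $u\equiv U^b$ on $B_2\mz$.

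A slightly cleaner way to organize the same idea, which I would probably prefer to write, is to run the rescaling directly on the norm estimate: for any $\la\in(0,1)$ the rescaled solution $u_\la$ is again a very weak solution of \eqref{NS} on $B_2\mz$ satisfying \eqref{u-bound} (the bound \eqref{u-bound} is scaling invariant), and by discrete self-similarity $u_{\la_1^k}=u$, so one may apply Theorem~\ref{theorem1} to $u_{\la_1^k}$. The associated Landau vector is unchanged, $b(u_{\la_1^k})=b(u)=b$, since $b$ is computed from the scaling-invariant surface integral $\int_{|x|=1}T_{ij}n_j$ and $T_{ij}(u_\la,p_\la)$ transforms by $\la^2$-homogeneity in a way that preserves this integral. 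Then $u_{\la_1^k}-U^b=(u-U^b)_{\la_1^k}=w_{\la_1^k}$, and evaluating the scale-invariant quantity $\sup_{x\in B_1}|x|^{3/q-1}|w_{\la_1^k}(x)|\le CC_*$ while rewriting it in terms of $w$ shows this supremum equals $\la_1^{k(2-3/q)}\sup_{x\in B_{\la_1^k}}|x|^{3/q-1}|w(x)|$ times a harmless factor, which tends to $0$ as $k\to\infty$; thus $w\equiv 0$.

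The only genuine subtlety — and the step I would treat most carefully — is the bookkeeping of how $b$ and the norms transform under \eqref{rescaled}, in particular checking that $b(u_\la)=b(u)$ so that one is comparing $u_\la$ with the \emph{same} $U^b$ at every scale; this is what makes the telescoping collapse rather than drift. Everything else is routine: the propagation of \eqref{u-bound} to $B_2\mz$, the applicability of Theorem~\ref{theorem1} at each scale, and the limit $k\to\infty$ using $2-3/q>0$. Once $w$ vanishes on a shrinking sequence of scales it vanishes everywhere on $B_2\mz$ by self-similarity, giving $u\equiv U^b$.
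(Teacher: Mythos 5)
Your approach is essentially the paper's: the paper proves the companion Corollary \ref{th1-3} by exactly this rescaling argument (apply Theorem \ref{theorem1} to $u_\la$, note $U^b$ is scale-invariant and $b$ is unchanged, and let the scale parameter run), and for the discretely self-similar case the telescoping identity $w(\la_1^k x)=\la_1^{-k}w(x)$ combined with the weighted sup bound in \eqref{thm1-1} is the intended argument. One correction, though: your justification of the crucial limit is wrong as written. You claim that $q<3$ gives $2-3/q>0$, but $2-3/q>0$ is equivalent to $q>3/2$; for $q\in(1,3/2]$ the exponent $k(2-3/q)$ is nonpositive and the bound $|w(x)|\le CC_*\,\la_1^{k(2-3/q)}|x|^{1-3/q}$ does not tend to $0$ as $k\to\infty$. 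The fix is immediate and is exactly what the paper does in its proof of Corollary \ref{th1-3}: fix a single exponent in $(3/2,3)$, say $q=2$ (so the weight is $|x|^{1/2}$ and the gain per step is $\la_1^{1/2}$), and run the argument with that $q$; this requires $C_*\le C_*(2)$, which is the natural reading of ``satisfies the assumptions of Theorem \ref{theorem1}'' here. With that adjustment, and with the observation (which you make in your second formulation) that the sup bound is valid at $y=\la_1^k x\in B_1$ for every $k\ge 0$ rather than only for $k$ placing $y$ in a fixed annulus, the proof is complete.
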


This corollary also follows from \cite{Korolev-Sverak} (with domain
$\R^3 \bs B_1$ and $\la_1 >1$).  In the case of small $C_*$, this
corollary extends the result of Sverak \cite{Sverak} on minus-one
homogeneous solutions. The classification of discretely self-similar
solutions with large $C_*$ is unknown.

As another application, we consider a conjecture by Sverak
\cite[\S 5]{Sverak}:
\begin{conjecture}
\label{conj}
If $u$ is a solution of the stationary Navier-Stokes
equations \eqref{NS} with zero force in $\R^3 \mz$ satisfying
\eqref{u-bound} with some $C_* >0$. Then $u$ is a Landau solution.
\end{conjecture}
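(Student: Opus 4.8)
This is Sverak's conjecture, and to my knowledge it is open for large $C_*$; what follows is a strategy rather than a complete argument, and I will be explicit about where it stalls. The starting point is that both the hypothesis \eqref{u-bound} and the equation \eqref{NS} are invariant under the scaling \eqref{rescaled}, so the rescaled family $u_\la(x)=\la u(\la x)$ satisfies the \emph{same} bound \eqref{u-bound} with the \emph{same} constant $C_*$ for every $\la>0$. Moreover the net flux $b_i=\int_{|x|=1}T_{ij}(u,p)n_j$ is independent of the radius and invariant under \eqref{rescaled}, while $U^b$ is unchanged by rescaling because it is $(-1)$-homogeneous. These facts already dispose of the small-$C_*$ case by a bootstrap on Theorem \ref{theorem1}: fixing some $q\in(3/2,3)$ with $C_*\le C_*(q)$, applying the pointwise part of \eqref{thm1-1} to $u_\la$, and using $(u_\la-U^b)(x)=\la(u-U^b)(\la x)$, one gets for fixed $y\neq0$ and $\la\to\infty$ the bound $|(u-U^b)(y)|\le CC_*\,\la^{3/q-2}|y|^{1-3/q}\to0$, whence $u\equiv U^b$. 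The genuine difficulty is therefore confined to large $C_*$, exactly the regime in which, as the authors emphasize, every perturbative method collapses because the $L^3_{wk}$ quasi-norm neither shrinks under the scaling nor under restriction to smaller balls.

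For large $C_*$ I would attempt a tangent-flow (blow-up/blow-down) argument. From the scale-invariant $L^\infty_{loc}(\R^3\mz)$ bound one bootstraps interior regularity for \eqref{NS} to obtain uniform $C^k_{loc}(\R^3\mz)$ estimates on the whole family $\{u_\la\}$, hence precompactness. Along sequences $\la\to0$ and $\la\to\infty$ one extracts limits $V_0$ and $V_\infty$, each a smooth solution of \eqref{NS} on $\R^3\mz$ still obeying \eqref{u-bound}. The crux is to show that every such tangent limit is $(-1)$-homogeneous: since the set of blow-down (resp. blow-up) limits is itself invariant under \eqref{rescaled}, homogeneity of a single tangent is equivalent to uniqueness of the tangent. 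Once homogeneity is known, Sverak's classification \cite{Sverak} forces $V_0=U^{b_0}$ and $V_\infty=U^{b_\infty}$, and the scale-invariance of the flux gives $b_0=b_\infty=b$.

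Establishing homogeneity of the tangents is where I expect the plan to be genuinely hard, and it is the step for which no proof is currently available at large data. In variational or elliptic settings one would deduce homogeneity of tangent cones from a monotonicity formula — an Almgren frequency, or a scale-monotone energy obtained from a Pohozaev/Rellich identity — and then read off the tangent flux from the monotone quantity. For the stationary Navier-Stokes system the obstruction is structural: the transport term $(u\cdot\nb)u$ is not the gradient of any natural functional, so the equation is not variational and no monotone scale-invariant quantity is known. Absent such a formula one cannot even exclude that the tangent limit depends on the chosen subsequence, i.e. that the rescalings spiral among distinct Landau solutions.

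Granting homogeneity of the tangents, the final step is a rigidity/Liouville argument upgrading ``$u$ is asymptotically Landau at $0$ and at $\infty$'' to ``$u\equiv U^b$.'' One writes $w=u-U^b$, which solves a difference equation in which the $\de$-singularities at the origin cancel (the mechanism behind the reduction preceding Lemma \ref{lemma3}) and which carries zero net flux, and tries to conclude $w\equiv0$ from $w_\la\to0$ in both scaling limits. This reduces to uniqueness, in the scale-invariant class, for the linearization $-\De w+(U^b\cdot\nb)w+(w\cdot\nb)U^b+\nb\pi=0$, $\div w=0$. For small $|b|$ this operator is coercive, which is precisely what drives Theorems \ref{theorem1}--\ref{theorem2} and the Kim--Kozono scheme; but for large $|b|$ coercivity is lost and the linearized operator may possess neutral or unstable modes, so the Liouville step can fail outright. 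In short, the conjecture hinges on a non-perturbative understanding of the linearization about a large-flux Landau solution (a spectral-gap or unique-continuation input) together with a monotonicity formula forcing homogeneity of tangents; supplying either ingredient for arbitrary $C_*$ is, I believe, the essential open problem.
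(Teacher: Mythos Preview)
The paper does not prove this conjecture in full; it establishes only the small-$C_*$ case as Corollary~\ref{th1-3}, and your scaling argument for that case (apply Theorem~\ref{theorem1} to $u_\la$, use scale-invariance of $b$ and of $U^b$, then send $\la\to\infty$ to kill $w=u-U^b$ pointwise) is exactly the paper's proof in \S\ref{sec3.3}, with the cosmetic difference that the paper fixes $q=2$ while you allow any $q\in(3/2,3)$. Your discussion of the large-$C_*$ regime is a thoughtful outline of strategies and obstructions, but there is nothing in the paper to compare it against, as the paper explicitly leaves that case open.
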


We give a partial answer for this problem.
\begin{corollary}
\label{th1-3}
Conjecture \ref{conj} is true, provided the constant $C_*$ is
sufficiently small.
\end{corollary}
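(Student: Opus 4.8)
The plan is to combine the local description near the origin furnished by Theorem~\ref{theorem1} with the known asymptotics at infinity of Korolev--Sverak \cite{Korolev-Sverak}, to match the two resulting Landau profiles via conservation of the momentum flux, and then to kill the global difference $u-U^b$ by an energy estimate. First I would fix some $q\in(2,3)$ and take $C_*$ at most $C_*(q)$ and small enough for the other ingredients below. Since the restriction of $u$ to $B_2\mz$ satisfies \eqref{u-bound}, Theorem~\ref{theorem1} gives a pressure $p$ with $\abs{p(x)}\le C\abs x^{-2}$ and a vector $b=b(u)$ such that $(u,p)$ solves \eqref{u-eq} in $B_2$ and
\[
\norm{u-U^b}_{W^{1,q}(B_1)}+\sup_{0<\abs x<1}\abs x^{3/q-1}\abs{(u-U^b)(x)}\le CC_* ,
\]
so that $w:=u-U^b=o(\abs x^{-1})$ as $x\to0$ and $\norm{rU^b}_{L^\infty}\lec C_*$. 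Since $u$ also satisfies $\abs{u(x)}\le C_*\abs x^{-1}$ on $\R^3\bs B_1$, the main result of \cite{Korolev-Sverak} (valid once $C_*$ is small) then supplies a vector $\tilde b$ and a $\de>0$ with $u(x)=U^{\tilde b}(x)+O(\abs x^{-1-\de})$ as $\abs x\to\infty$, together with the corresponding decay of $\nb u-\nb U^{\tilde b}$ and of $p-P^{\tilde b}$.

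Next I would show $b=\tilde b$. By the regularity theory for small solutions \cite{Kim-Kozono}, $u$ and its pressure $p$ are smooth on $\R^3\mz$, and there $\pd_j T_{ij}(u,p)=\pd_i p+\pd_j(u_iu_j)-\De u_i=0$ because $\div u=0$ and the force vanishes; hence $R\mapsto\int_{\abs x=R}T_{ij}(u,p)\,n_j\,dS$ is independent of $R>0$. Theorem~\ref{theorem1} identifies its value with $b_i$, while the asymptotics above give $T_{ij}(u,p)-T_{ij}(U^{\tilde b},P^{\tilde b})=o(\abs x^{-2})$ and $\int_{\abs x=R}T_{ij}(U^{\tilde b},P^{\tilde b})\,n_j\,dS=\tilde b_i$, so letting $R\to\infty$ yields $b=\tilde b$. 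Consequently $w=u-U^b$ is defined on all of $\R^3\mz$, satisfies $\abs x\abs{w(x)}\lec C_*$, is $o(\abs x^{-1})$ as $x\to0$ and $O(\abs x^{-1-\de})$ as $\abs x\to\infty$, and, with $\pi:=p-P^b$, solves the perturbed Stokes system \eqref{NSw},
\[
-\De w+(w\cdot\nb)w+(w\cdot\nb)U^b+(U^b\cdot\nb)w+\nb\pi=0,\qquad \div w=0 ,
\]
on $\R^3\mz$. Bootstrapping \eqref{NSw} off the $W^{1,q}(B_1)$ bound near the origin (this is where $q>2$ is used) and off \cite{Korolev-Sverak} near infinity, I would upgrade the above to scaling-consistent pointwise bounds for $\nb w$ and $\pi$ at both ends; in particular $w$ lies in the homogeneous space $\cD^{1,2}(\R^3)$.

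Finally I would test \eqref{NSw} with $w$ on an annulus $\{\e<\abs x<R\}$ and let $\e\to0$, $R\to\infty$. Using $\div w=\div U^b=0$, the terms $(w\cdot\nb)w$, $(U^b\cdot\nb)w$ and $\nb\pi$ contribute only boundary integrals over $\abs x=\e$ and $\abs x=R$, and all these vanish in the limit by the decay just obtained; what remains is, by Hardy's inequality on $\R^3$,
\[
\int_{\R^3}\abs{\nb w}^2\,dx=-\int_{\R^3}w_j\,(\pd_j U^b_i)\,w_i\,dx\le C\,\norm{\,\abs x^{2}\nb U^b\,}_{L^\infty}\int_{\R^3}\frac{\abs w^2}{\abs x^2}\,dx\le 4C\,\norm{\,\abs x^{2}\nb U^b\,}_{L^\infty}\int_{\R^3}\abs{\nb w}^2\,dx ,
\]
with $C$ absolute. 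Since $\norm{rU^b}_{L^\infty}\lec C_*$ forces $\norm{\,\abs x^{2}\nb U^b\,}_{L^\infty}\lec C_*$, for $C_*$ small enough the prefactor is $<1$, so $\nb w\equiv0$; as $w\to0$ at infinity this gives $w\equiv0$, i.e.\ $u\equiv U^b$ is a Landau solution.

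The hard part will be making the decay of the difference sharp enough to legitimize the last two steps. One must bootstrap \eqref{NSw} near the origin to pointwise bounds of the form $\abs{\nb w(x)}\lec C_*\abs x^{-3/q}$ and, crucially, $\abs{\pi(x)}\lec C_*\abs x^{-3/q}$ — genuinely sharper near $0$ than the a priori $\abs x^{-2}$ pressure bound — so that $w\in\cD^{1,2}(\R^3)$ and the boundary integrals over $\abs x=\e$ really do vanish; this forces $q$ to be taken in $(2,3)$ rather than near $1$. One must also check that the smallness requirements — $C_*\le C_*(q)$ from Theorem~\ref{theorem1}, the smallness needed in \cite{Korolev-Sverak}, and $4C\norm{\abs x^{2}\nb U^b}_{L^\infty}<1$ from the Hardy step — are simultaneously satisfiable by a single sufficiently small $C_*$. (Alternatively, the last step can be replaced by a global-domain version of the uniqueness statement for \eqref{NSw} proved along the way to Theorem~\ref{theorem1}.)
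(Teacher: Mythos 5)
Your route is viable in outline, but it is a genuinely different — and much heavier — argument than the paper's. The paper proves the corollary using \emph{only} Theorem~\ref{theorem1} and a scaling trick: the hypothesis $|u(x)|\le C_*|x|^{-1}$ on $\R^3\mz$ is invariant under $u_\la(x)=\la u(\la x)$, and the vector $b$, being the flux $\int_{|x|=R}T_{ij}(u,p)n_j$ which is independent of $R$, is the \emph{same} for every $u_\la$. Hence Theorem~\ref{theorem1} with $q=2$ applies to each $u_\la=U^b+w_\la$ and yields the uniform-in-$\la$ bound $|w_\la(x)|\le CC_*|x|^{-1/2}$ for $|x|<1$; writing $w_\la(x)=\la w(\la x)$ and setting $y=\la x$ this becomes $|w(y)|\le CC_*\la^{-1/2}|y|^{-1/2}$ for $|y|\le\la$, and letting $\la\to\infty$ forces $w\equiv0$. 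This one-line observation sidesteps everything you flag as ``the hard part'': no appeal to \cite{Korolev-Sverak}, no matching of $b$ with $\tilde b$, no Hardy/absorption step with its own smallness condition, and — most importantly — no improved decay of $\nb w$ and of the pressure difference near the origin. On that last point you are right to be worried: the a priori bound $|\pi|\le C|x|^{-2}$ from Lemma~\ref{lemma3} is \emph{not} sufficient to kill the boundary term $\int_{|x|=\e}\pi\, w\cdot n$ (it gives $\e^{2}\cdot\e^{-2}\cdot\e^{-(3/q-1)}\to\infty$), so your energy argument genuinely requires a separate pressure bootstrap near the origin, which is nontrivial work left unexecuted in your sketch. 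The flux-matching step and the energy identity themselves are sound as written, and your approach has the mild virtue of making the role of the exterior asymptotics explicit, but if you keep it you should either carry out the pressure bootstrap in detail or replace the entire second half by the rescaling argument above.
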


The above corollary can be also shown to be true by either our main theorem
or the result of Korolev-Sverak \cite{Korolev-Sverak}, see section
\ref{sec3.3}. The corresponding conjecture  for large $C_*$
is related to the regularity problem of evolutionary
Navier-Stokes equations via the usual blow-up procedures.

\section{Preliminaries}

In this section we collect some lemmas for the proof of Theorem
\ref{theorem1}.  The first lemma recalls H\"{o}lder 
and Sobolev type inequalities in Lorentz spaces.
We denote the Lorentz spaces by $L^{p,q}$
($1<p<\infty$, $1\le q \le \infty$).
Note $L^3_{wk}=L^{3,\infty}$.

\begin{lemma}
\label{lemma1}
Let $B=B_2\subset \R^n$, $n \ge 2$.

i) \quad Let $1<p_1, p_2< \infty$ with $1/p:=1/p_1 +1/p_2 <1$ and
let $1\le r_1, r_2\le \infty$. For $f \in L^{p_1,r_1}$ and $g \in
L^{p_2,r_2}$, we have
\begin{equation}
\|fg\|_{L^{p,r}(B)} \le C\|f\|_{L^{p_1,r_1}(B)}
\|g\|_{L^{p_2,r_2}(B)} \qquad \textit{for} \ r:=\textit{min}\{r_1,
r_2 \},
\end{equation}
where $C=C(p_1,r_1,p_2,r_2)$.
\\
ii) \quad Let $1<r<n$. For $f\in W^{1,r}(B)$, we have
\begin{equation}
\|f\|_{L^{\frac{nr}{n-r},r}(B)} \le C\| f \|_{W^{1,r}(B)},
\end{equation}
where $C=C(n,r)$.

\end{lemma}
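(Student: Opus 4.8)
The plan is to deduce both inequalities from two classical facts about Lorentz spaces---O'Neil's product inequality for part (i) and the Hardy--Littlewood--Sobolev inequality for the Riesz potential $I_1$ for part (ii)---together with two elementary observations: the continuous inclusion $L^{p,s_1}\hookrightarrow L^{p,s_2}$ whenever $s_1\le s_2$ (valid on any measure space, with norm constant depending only on $p,s_1,s_2$), and the fact that restriction to a measurable subset does not increase any Lorentz quasi-norm (since the decreasing rearrangement of $f\mathbf 1_E$ is pointwise dominated by that of $f$). These two observations are exactly what lets one replace the harmonic-mean second exponent produced by O'Neil's theorem by $\min\{r_1,r_2\}$ in (i) and transfer between the ball $B=B_2$ and $\R^n$ in (ii).

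For (i), O'Neil's inequality gives, for $1/p=1/p_1+1/p_2$, the bound $\|fg\|_{L^{p,s}(B)}\le C\|f\|_{L^{p_1,r_1}(B)}\|g\|_{L^{p_2,r_2}(B)}$ with $1/s=1/r_1+1/r_2$. Since $1/s\ge 1/\min\{r_1,r_2\}$, i.e. $s\le r:=\min\{r_1,r_2\}$, the inclusion $L^{p,s}(B)\hookrightarrow L^{p,r}(B)$ upgrades this to the stated estimate; the hypothesis $1/p<1$ keeps $p$ in the range where the relevant constants are finite, and when $r_1$ or $r_2$ equals $\infty$ no inclusion is needed and the inequality is just the elementary weak-type H\"older inequality. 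O'Neil's inequality itself follows either from the usual splitting $f=f\mathbf 1_{\{|f|>\lambda\}}+f\mathbf 1_{\{|f|\le\lambda\}}$ with H\"older on each piece and optimization in $\lambda$, or from real interpolation of the bilinear pointwise-product map starting from the endpoint inequalities $L^{p_1}\times L^{p_2}\to L^{p}$.

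For (ii), first invoke a bounded linear extension operator $E\colon W^{1,r}(B_2)\to W^{1,r}(\R^n)$ whose image consists of functions supported in a fixed compact set (available since $\partial B_2$ is smooth), so that $\|\nabla(Ef)\|_{L^r(\R^n)}\le C(n,r)\|f\|_{W^{1,r}(B_2)}$. Since $\|f\|_{L^{nr/(n-r),r}(B_2)}\le\|Ef\|_{L^{nr/(n-r),r}(\R^n)}$ by the restriction remark, it suffices to prove $\|g\|_{L^{nr/(n-r),r}(\R^n)}\le C\|\nabla g\|_{L^r(\R^n)}$ for $g=Ef$. For such $g$ (compactly supported, in $W^{1,r}$) the potential bound $|g(x)|\le c_n\int_{\R^n}|x-y|^{1-n}|\nabla g(y)|\,dy=c_n\,(I_1|\nabla g|)(x)$ holds, and the Lorentz-space Hardy--Littlewood--Sobolev inequality $\|I_1 h\|_{L^{q,s}(\R^n)}\le C\|h\|_{L^{r,s}(\R^n)}$ with $1/q=1/r-1/n$, applied with $h=|\nabla g|$ and $s=r$ (so that $L^{r,r}=L^r$ and $q=nr/(n-r)$), completes the whole-space estimate. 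Chaining the extension bound, this estimate, and the restriction gives the lemma.

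The only ingredient carrying real content is the Lorentz refinement of Hardy--Littlewood--Sobolev used in (ii), equivalently the sharp Sobolev--Lorentz embedding $\|g\|_{L^{nr/(n-r),r}}\lesssim\|\nabla g\|_{L^r}$; this can be obtained from a pointwise rearrangement estimate for Riesz potentials followed by a Hardy-type inequality in the $ds/s$ scale, or simply quoted (O'Neil, Peetre, Tartar). I therefore do not expect a genuine obstacle: the two places needing care are the exponent bookkeeping in (i), resolved by $L^{p,s}\hookrightarrow L^{p,r}$, and the passage between $B_2$ and $\R^n$ via extension and restriction in (ii).
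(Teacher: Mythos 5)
Your proof is correct. The paper itself offers no argument for this lemma, only citations (O'Neil for part (i) and for part (ii) on $\R^n$; Kozono--Yamazaki and Kim--Kozono for the bounded-domain version of (ii)), and your write-up is an accurate reconstruction of exactly those arguments: O'Neil's product inequality combined with the second-index nesting $L^{p,s}\hookrightarrow L^{p,r}$ for $s\le r$ in (i), and extension to $\R^n$ plus the Riesz-potential bound $|g|\le c_n I_1|\nabla g|$ and the Lorentz form of Hardy--Littlewood--Sobolev (O'Neil's convolution theorem with $|x|^{1-n}\in L^{n/(n-1),\infty}$) followed by restriction in (ii). No gaps.
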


Part (i) of Lemma \ref{lemma1} was proved in \cite{ONeil}. Part (ii)
was proved in \cite{ONeil} for $\R^n$ and in
\cite{Kozono-Yamazaki,Kim-Kozono} for bounded domains.

By this lemma, when $n=3$ and $1<r<3$, we have
\begin{equation} \label{lemma1-1}
  \norm{fg}_{L^r(B)} \le C \norm{f}_{L^3_{wk}} \norm{g}_{L^{\frac
  {3r}{3-r},r}} \le C_r \norm{f}_{L^3_{wk}(B)} \norm{g}_{W^{1,r}(B)}.
\end{equation}
This estimate first appeared in \cite{Kim-Kozono} and plays an important role
for our application.

The next lemma is on interior estimates for Stokes system with no
assumption on the pressure.

\begin{lemma}
\label{lemma2}
Assume $v\in L^1$ is a distribution solution of the Stokes
system
\begin{equation}
-\De v_i + \pd_i p = \pd_j f_{ij}, \quad \div v=0 \quad \text{in
 }B_{2R}
\end{equation}
and $f \in L^r$ for some $r \in (1,\infty)$.  Then $v\in
W^{1,r}_{loc}$ and, for some constant $C_r$ independent of $v$ and
$R$,
\begin{equation}
\norm{\nb v}_{L^r(B_R)} \le C_r  \norm{f}_{L^r(B_{2R})} + C_r R^{-4+3/r}
\norm{v}_{L^1(B_{2R})}.
\end{equation}
\end{lemma}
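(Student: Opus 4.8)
The plan is to localize the equation with a cutoff and reduce to the whole-space estimate for the Stokes system, then absorb the low-order error terms. First I would rescale: after replacing $v(x)$ by $v(Rx)$, $p$ by $Rp(Rx)$, $f$ by $f(Rx)$, the equation holds on $B_2$ and it suffices to prove $\|\nb v\|_{L^r(B_1)} \le C_r\|f\|_{L^r(B_2)} + C_r\|v\|_{L^1(B_2)}$; the claimed powers of $R$ are then dictated by scaling (note $\nb v$ scales like $R^{-1}$ relative to $v$, and $L^1(B_{2R})$ carries an extra $R^3$ from the measure, giving the exponent $-4+3/r$). Fix a cutoff $\zeta\in C^\infty_c(B_2)$ with $\zeta\equiv 1$ on $B_1$. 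Since $\div v=0$ only holds in $B_2$, the naive product $\zeta v$ is not divergence free; I would introduce a Bogovskii-type corrector $w$ solving $\div w = -\nb\zeta\cdot v = -\div(\zeta v)$ with $w$ supported in an annulus where $\zeta$ is non-constant, and with $\|w\|_{W^{1,r}} \le C\|\nb\zeta\cdot v\|_{L^r}$; here one uses that $v\in L^r_{loc}$ away from needing it globally — more carefully, one should first obtain $v\in L^r_{loc}$ by a preliminary bootstrap (see below), or run the argument on a slightly shrunk ball. Then $V:=\zeta v + w$ is divergence free, compactly supported, and satisfies on all of $\R^3$ a Stokes system $-\De V + \nb P = \nb F_{ij} + (\text{lower order: } \nb\zeta\cdot\nb v,\ \De\zeta\, v,\ \text{terms from }w,\ \zeta\pd_j f_{ij}\text{ commutators})$.

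The key step is then to apply the standard Calderón–Zygmund estimate for the Stokes system on $\R^3$ (equivalently, solve via the Oseen/Stokes fundamental solution, or cite the classical $L^r$ theory): if $-\De V + \nb P = \div G$ with $\div V=0$ and $G\in L^r(\R^3)$, $V\in L^1$, then $\|\nb V\|_{L^r(\R^3)} \le C_r\|G\|_{L^r(\R^3)}$. The forcing $G$ for $V$ consists of: $\zeta f$ (controlled by $\|f\|_{L^r(B_2)}$), terms of the form (cutoff derivatives)$\times v$ and (cutoff derivatives)$\times\nb v$ arising from moving $\zeta$ past the derivatives, and contributions from $\nb w$ and $w$. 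Writing these so that everything sits under one divergence (e.g.\ $\De\zeta\,v = \div(v\,\nb\zeta) - \nb\zeta\cdot\nb v$, and the genuinely non-divergence pieces like $\nb\zeta\cdot\nb v$ get absorbed), one arrives at
\begin{equation}
\|\nb v\|_{L^r(B_1)} \le \|\nb V\|_{L^r(\R^3)} \le C_r\|f\|_{L^r(B_2)} + C_r\|v\|_{L^r(A)} + C_r\|\nb v\|_{L^r(A)},
\end{equation}
where $A\subset B_2\bs B_1$ is the annulus where $\nb\zeta\ne 0$.

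The main obstacle — and the standard resolution — is the presence of $\|\nb v\|_{L^r(A)}$ on the right-hand side: one cannot simply absorb it since $A$ is not contained in $B_1$. The fix is an iteration on a chain of shrinking balls (or annuli): choose radii $1=\rho_0<\rho_1<\dots<\rho_k=2$, apply the localized estimate between consecutive radii to get $\|\nb v\|_{L^r(B_{\rho_{i}})} \le C\|f\|_{L^r(B_2)} + C\|\nb v\|_{L^r(B_{\rho_{i+1}}\bs B_{\rho_i})} + C\|v\|_{L^r(B_2)}$, then use a Young's-inequality / interpolation interior-estimate argument (à la the standard iteration lemma: $\|\nb v\|_{L^r(B_\rho)} \le \tfrac12\|\nb v\|_{L^r(B_{\rho'})} + C(\rho'-\rho)^{-N}(\dots)$) to close it and eliminate the gradient term on the right entirely. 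Finally, $\|v\|_{L^r(B_2)}$ must be downgraded to $\|v\|_{L^1(B_{2R})}$: this is a separate bootstrap using the same Stokes estimate together with Sobolev embedding — starting from $v\in L^1$ and $f\in L^r$, one gains integrability for $v$ in successively smaller balls (first to some $L^{r_1}$, then iterating), which is exactly the qualitative claim $v\in W^{1,r}_{loc}$; a finite number of steps reaches any $r<\infty$ and in particular lets one replace $L^r$ by $L^1$ on the right at the cost of the constant $C_r$. Tracking the cutoff-derivative factors through this finite iteration and restoring the scaling gives the stated $R^{-4+3/r}$ weight, completing the proof.
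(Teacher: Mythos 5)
The paper does not actually prove this lemma: it is quoted verbatim from Sverak--Tsai \cite{Sverak-Tsai}, Theorem~2.2 (equivalently \cite[Lemma A.2]{CSTY} for time-independent functions), with the sole added remark that the proof there uses only $v\in L^1$ rather than $v\in W^{1,r}_{loc}$. Your sketch reconstructs essentially the standard argument behind those references: rescale to $R=1$, cut off, correct the divergence with a Bogovskii field, apply the whole-space Calder\'on--Zygmund estimate for the Stokes system, and close the annulus terms by the iteration-on-shrinking-radii trick, with a preliminary bootstrap to upgrade $v\in L^1$ to $v\in W^{1,r}_{loc}$ and to justify the manipulations. The overall architecture is sound, and your scaling count for the exponent $-4+3/r$ is correct (though note the forcing must rescale as $f\mapsto Rf(R\cdot)$, not $f(R\cdot)$, for the equation to be preserved).

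The one genuine omission is the pressure. The whole point of this lemma, as the paper emphasizes, is that \emph{no assumption is made on $p$}; yet multiplying the equation by the cutoff $\zeta$ produces, besides the terms you list, the forcing term $p\,\nb\zeta$, which cannot be absorbed into the new pressure $\zeta p$ and is not controlled by any hypothesis. The standard repair is to first estimate $p$ locally modulo constants: since $\bar p\,\nb\zeta=\nb(\bar p\,\zeta)$ is a gradient, one may replace $p$ by $p-\bar p$ and then use $\nb p=\De v+\div f$ together with the Ne\v{c}as/Bogovskii duality estimate $\norm{p-\bar p}_{L^r(A')}\le C\norm{\nb p}_{W^{-1,r}(A')}\le C\bke{\norm{f}_{L^r(A')}+\norm{\nb v}_{L^r(A')}}$ on a slightly larger annulus; the resulting $\norm{\nb v}_{L^r(A')}$ term is then eliminated by the same shrinking-radii iteration you already invoke. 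Without this step the localized forcing $G$ is not controlled and the key inequality $\norm{\nb V}_{L^r}\le C\norm{G}_{L^r}$ cannot be applied. With it, your plan goes through and coincides with the cited proof.
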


This lemma is \cite{Sverak-Tsai}, Theorem 2.2. Although the statement
in \cite{Sverak-Tsai} assumes $v\in W^{1,r}_{loc}$, its proof
only requires $v \in L^1$.  This lemma can be also considered as
\cite[Lemma A.2]{CSTY} restricted to time-independent functions.

The following lemma shows the first part of Theorem \ref{theorem1},
except \eqref{thm1-1}. In particular, it shows that $(u,p)$ solves
\eqref{u-eq}.

\begin{lemma}
\label{lemma3} If $u$ is a very weak solution of \eqref{NS} with
zero force in $B_2 \mz$ satisfying \eqref{u-bound} in $B_2 \mz$
(with $C_*$ allowed to be large), there is a scalar function $p$
satisfying $|p(x)|\le C|x|^{-2}$, unique up to a constant, such that
$(u,p)$ satisfies \eqref{u-eq} in $B_2$ with $b_i = \int_{|x|=1}
T_{ij}(u,p) n_j(x)$. Moreover, $u,p$ are smooth in $B_2 \mz$.
\end{lemma}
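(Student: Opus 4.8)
The plan is to bootstrap from the very weak solution hypothesis to smoothness on the punctured ball first, and then extract the distributional equation across the origin.

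\textbf{Step 1: Interior regularity on $B_2\setminus\{0\}$.} Fix any point $x_0$ with $0<|x_0|<2$ and work on a ball $B_{2R}(x_0)\subset B_2\setminus\{0\}$. On this ball $u$ is a very weak solution of the Navier--Stokes system with zero force, so it solves the Stokes system $-\De u_i + \pd_i p = \pd_j f_{ij}$ with $f_{ij}=-u_iu_j$, for a pressure $p$ (defined on this ball, determined up to a constant). The bound \eqref{u-bound} gives $u\in L^\infty_{loc}(B_2\setminus\{0\})$, hence $f_{ij}=-u_iu_j\in L^r$ for every $r<\infty$ on $B_{2R}(x_0)$. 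Lemma \ref{lemma2} then yields $\nb u\in L^r_{loc}$, i.e. $u\in W^{1,r}_{loc}$ for all $r<\infty$, hence $u\in C^{0,\al}_{loc}$. Now iterate: the nonlinear term $(u\cdot\nb)u$ lies in $L^r_{loc}$, so by standard Stokes (or elliptic) regularity $u\in W^{2,r}_{loc}$ and $p\in W^{1,r}_{loc}$; feeding back, $(u\cdot\nb)u\in W^{1,r}_{loc}$, giving $u\in W^{3,r}_{loc}$, and so on. A standard bootstrap gives $u,p\in C^\infty(B_2\setminus\{0\})$. One must reconcile the locally-defined pressures on overlapping balls: they differ by constants on overlaps, and since $B_2\setminus\{0\}$ is connected one obtains a single pressure $p\in C^\infty(B_2\setminus\{0\})$, unique up to an additive constant.

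\textbf{Step 2: Pressure bound $|p(x)|\le C|x|^{-2}$.} Use the scaling \eqref{rescaled}: for $\la=|x_0|$, the rescaled pair $(u_\la,p_\la)$ solves \eqref{NS} on an annulus of fixed size around a point of $|x|=1$, and by \eqref{u-bound} $u_\la$ is bounded there uniformly in $\la$. The interior estimates of Step 1, applied on this fixed annulus, give a uniform bound $|p_\la|\le C$ there, hence $|p(x_0)|=\la^{-2}|p_\la(x_0/\la)|\le C|x_0|^{-2}$. The additive constant in $p$ must be fixed so that this decay holds (otherwise adding a nonzero constant destroys it), which pins down $p$ uniquely. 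One can also obtain $|\nb p(x)|\le C|x|^{-3}$ and $|\nb u(x)|\le C|x|^{-2}$ the same way, which will be useful below.

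\textbf{Step 3: The equation across the origin.} We must show $(u,p)$ solves \eqref{u-eq} in all of $B_2$ in the distributional sense, with $b_i = \int_{|x|=1}T_{ij}(u,p)n_j$. Take any $\ph\in C^\infty_c(B_2,\R^3)$ (not necessarily divergence free) and compute $\langle -\De u + (u\cdot\nb)u + \nb p,\ph\rangle$. Since $u$ solves \eqref{NS} classically on $B_2\setminus\{0\}$, integrating by parts on $B_2\setminus B_\rho$ produces only boundary terms on $|x|=\rho$; written in divergence form the equation is $\pd_j T_{ij}(u,p)=0$ (using $\div u=0$), so the pairing equals $\lim_{\rho\to 0}\int_{|x|=\rho} T_{ij}(u,p)n_j\,\ph_i\,dS$ up to sign, where $n$ is the inner normal. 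Because $T_{ij}$ is $(-2)$-homogeneous in leading order ($|T_{ij}|\le C|x|^{-2}$ from Steps 1--2), this surface integral converges as $\rho\to 0$ to $\ph_i(0)\int_{|x|=1}T_{ij}(u,p)n_j\,dS =: b_i\ph_i(0) = \langle b\de,\ph\rangle$. The vector $b$ is independent of $\rho$ by the divergence theorem applied to $\pd_jT_{ij}=0$ on annuli. For divergence-free test functions this already matches the very weak formulation; for general $\ph$ one absorbs $\div u=0$ as a distribution on $B_2$, which holds because $u\in L^\infty_{loc}(B_2\setminus\{0\})\cap L^2_{loc}(B_2)$ (using \eqref{u-bound}, $u\in L^q_{loc}(B_2)$ for $q<3$, in particular $L^2$) and $\div u=0$ away from $0$ with no room for a $\de$-term since $u\in L^2\subset L^1$.

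\textbf{Main obstacle.} The delicate point is Step 3: justifying that no \emph{derivatives} of $\de$ appear, i.e. that the limit of the boundary integrals is exactly $b_i\ph_i(0)$ and not, say, something involving $\nb\ph(0)$ or a divergent piece. This requires the precise decay $|T_{ij}(u,p)(x)|\le C|x|^{-2}$, and for the error term $\int_{|x|=\rho}T_{ij}n_j(\ph_i(x)-\ph_i(0))\,dS$ to vanish one uses $|\ph_i(x)-\ph_i(0)|\le C\rho$ against a surface of area $C\rho^2$ and integrand $C\rho^{-2}$, giving $O(\rho)\to 0$. Equivalently, one can argue that $\pd_j T_{ij}(u,p)$, a priori a distribution supported at $0$ and $(-3)$-homogeneous-controlled, must be a linear combination of $\de$ (no derivatives) precisely because $T_{ij}\in L^1_{loc}(B_2)$ — any distribution supported at a point that is the distributional divergence of an $L^1$ field is a multiple of $\de$. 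Making this rigorous, together with verifying $\div u = 0$ globally, is the technical heart of the lemma; the smoothness and pressure bounds are routine once Lemmas \ref{lemma2} and the scaling are in hand.
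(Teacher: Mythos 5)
Your proposal is correct and follows essentially the same route as the paper: interior regularity plus scaled derivative estimates give $|T_{ij}(u,p)(x)|\le C|x|^{-2}$, the flux $b$ is radius-independent by the divergence theorem, and integration by parts on $B_2\setminus B_\rho$ with the $O(\rho)$ error term identifies the singular part as exactly $b_i\delta$. The one imprecision is in your Step 2: interior Stokes estimates control $p_\la$ only up to an additive constant (i.e.\ they bound $\nabla p_\la$, hence the oscillation of $p_\la$ on the fixed annulus), so to get $|p(x)|\le C|x|^{-2}$ you must normalize $p$ once on the outer region and then accumulate the oscillation bounds over dyadic annuli down to $|x|$ --- equivalently, integrate $|\nabla p(x)|\le C|x|^{-3}$ radially from $|x|=1$, which is exactly what the paper does.
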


\begin{proof}
For each $R \in (0,1/2]$, $u$ is a very weak solution in $B_2 - \bar
B_R$ in $L^\infty$. Lemma \ref{lemma2} shows $u$ is a weak solution in
$W^{1,2}_{loc}$. The usual theory shows that $u$ is smooth and there
is a scalar function $p_R$, unique up to a constant, so that $(u,p_R)$
solves \eqref{NS} in $B_2 - \bar B_R$, see e.g.~\cite{Galdi-2}.  By
the scaling argument in Sverak-Tsai \cite{Sverak-Tsai} using Lemma
\ref{lemma2}, we have for $x \in B_{3R}- B_{2R}$,
\begin{align}
\label{u+p}
|\nabla^k u (x)| \le  \frac{C_kC_*}{|x|^{k+1}}
\qquad \textrm{for} \ k=1,2,\ldots,
\end{align}
where $C_k=C_k(C_*)$ are independent of $R \in (0,1/2]$ and its
dependence on $C_*$ can be dropped if $C_* \in (0,1)$. Varying $R$,
\eqref{u+p} is valid for $x \in B_{3/2} \mz$. For $0<R<R'$, by
uniqueness of $p_R'$, the difference $p_R|_{B_2-\bar B_{R'}}-p_{R'}$
is a constant.  Thus we can fix the constant by requiring $p_R =
p_{1/2}$ in $B_2 \bs \bar B_{1/2}$, and define $p(x) = p_R(x)$ for any
$x \in B_2 \mz$ with $R =|x|/2$. By the equation, $|\nb p(x)| \le C
C_* |x|^{-3}$.  Integrating from $|x|=1$ we get $|p(x)| \le CC_*
|x|^{-2}$.  In particular
\begin{equation}
\label{T}
|T_{ij}(u,p)(x)| \le CC_* |x|^{-2}
\qquad \textrm{for} \ x \in B_{3/2}\mz.
\end{equation}

Denote $NS(u) = - \De u + (u \cdot \nb)u + \nb p$. We have $NS(u)_i =
\pd_j T_{ij}(u)$ in the sense of distributions. Thus,
by divergence theorem and $NS(u)=0$ in $B_2 \mz$,
\begin{equation}
\label{b-int}
b_i = \int_{|x|=1} T_{ij}(u,p) n_j(x) = \int_{|x|=R} T_{ij}(u,p) n_j(x)
\end{equation}
for any $R \in (0,2)$.
Let $\phi$ be any test function in $C_c^{\infty}(B_1)$.
For small $\e>0$,
\begin{align*}
\langle NS(u)_i, \phi \rangle
&=
-\int T_{ij}(u) \pd_j \phi
\\
&=
-\int_{B_1\backslash B_{\varepsilon}} T_{ij}(u) \pd_j \phi
-
\int_{B_{\varepsilon}} T_{ij}(u) \pd_j \phi
\\
&=\int_{B_1\backslash B_{\varepsilon}} \pd_j T_{ij}(u) \phi
+
\int_{\partial B_{\varepsilon}}T_{ij}(u) \phi n_j
-\int_{\partial B_1} T_{ij}(u) \phi n_j
-\int_{B_{\ve}}T_{ij}(u)\pd_j \phi.
\end{align*}
In the last line, the first integral is zero since $NS(u)=0$ and the
third integral is zero since $\phi=0$.  By the pointwise estimate
(\ref{T}), the last integral is bounded by $C\ve^{3-2}$.  On the other
hand, by \eqref{b-int},
\begin{equation}
\int_{\partial B_{\varepsilon}}T_{ij}(u) \phi n_j
\rightarrow
b_i \phi(0) \quad \mathrm{as} \quad \ve \rightarrow 0.
\end{equation}
Thus $(u,p)$ solves \eqref{u-eq} and we have proved the lemma.
\end{proof}

\medskip

It follows from the proof that $|b| \le C C_*$ for $C_* <1$.
With this lemma, we have completely proved Theorem \ref{theorem1} in
the case $q<3/2$.  In the case $3/2 \le q <3$, it remains to prove
\eqref{thm1-1}.

\section{Proof of main theorem}
In this section, we present the proof of Theorem \ref{theorem1}. We
first prove that solutions belong to $W^{1,q}$. We next apply this
result to obtain the pointwise estimate.  For what follows, denote
\begin{equation}
 w=u-U, \quad U = U^b,
\end{equation}
where $U^b$ is the Landau solution with $b$ given by \eqref{b-int}.
By Lemma \ref{lemma3}, there is a function $\td p$ such that $(w,\td p)$
satisfies in $B_2$ that
\begin{equation}\label{NSw}
\begin{split}
-\De w + U \cdot \nb w + w \cdot \nb (U + w) + \nb \td p = 0 ,&\quad
\div w =0  ,
\\
|w(x)| \le \frac {CC_*} {|x|}, \quad |\td p(x)| \le \frac {CC_*} {|x|^2}.&
\end{split}
\end{equation}
Note that the $\de$-functions at the origin cancel.

\subsection{$W^{1,q}$ regularity}
In this subsection we will show $w\in W^{1,q}(B_1)$.  Fix a cut off
function $\ph$ with $\ph =1$ in $B_{9/8}$ and $\ph = 0$ in
$B_{11/8}^c$.  We localize $w$ by introducing
\begin{equation}
v = \ph w +\zeta
\end{equation}
where $\zeta$ is a solution of the problem $\div \zeta=- \nb \ph \cdot
w$.  By Galdi \cite[Ch.3]{Galdi-1}  Theorem 3.1, there exists such a
$\zeta$ satisfying
\begin{equation}\label{zeta-est}
\supp \zeta \subset B_{3/2} \backslash B_1, \quad \|\nabla \zeta
\|_{L^{100}} \le C\|\nabla \ph \cdot w\|_{L^{100}} \le CC_*.
\end{equation}
The vector $v$ is supported in $\bar B_{3/2}$, satisfies $v \in
W^{1,r}\cap L^3_{wk}$ for $r<3/2$ by \eqref{u-bound}, \eqref{u+p} and
\eqref{zeta-est}, and
\begin{equation}
-\De v + U \cdot \nb v + v \cdot \nb (U + v) + \nb \pi = f, \quad \div
v=0,
\label{NSv}
\end{equation}
where $\pi = \ph \td p$, and
\begin{equation}
\begin{split}
f= -2(\nb \ph \cdot \nb) w - (\Delta \ph) w+ (U\cdot \nb \ph) w
+(\ph^2-\ph)w \cdot \nb w 
+ (w\cdot \nb \ph) w 
\\ + \td p \nb \ph
- \De \zeta + (U \cdot \nb) \zeta + \zeta \cdot \nb (U + \ph w +
\zeta) + \ph w \cdot \nb \zeta
\end{split}
\end{equation}
is supported in the annulus $\bar B_{3/2} \bs B_{1}$.  One verifies
directly that, for some $C_1$,
\begin{equation}
\label{f-est}
\sup _{1 \le r \le 100} \norm{f}_{W_0^{-1,r}(B_2)}  \le C_1 C_*.
\end{equation}

Our proof is based on the following lemmas.
\begin{lemma}[Unique existence]
\label{lem3.1}
For any $3/2 \le r<3$, for sufficiently small $C_*=C_*(r)>0$, there is a
unique solution $v$ of \eqref{NSv} and \eqref{f-est} in the set
\begin{equation}
V=\{ v \in W^{1,r}_0(B_2), \quad \norm{v}_V:=
\norm{v}_{W^{1,r}_0(B_2)} \le C_2 C_* \}
\end{equation}
for some $C_2>0$ independent of $r \in [3/2,3)$.
\end{lemma}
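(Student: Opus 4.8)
The plan is to rewrite \eqref{NSv} as a perturbation of the linear Stokes system and to produce $v$ by the contraction mapping principle in $V$. Since $\div U=\div v=0$, all the lower-order terms can be put in divergence form: $U\cdot\nb v=\div(U\otimes v)$, $v\cdot\nb U=\div(v\otimes U)$, $v\cdot\nb v=\div(v\otimes v)$; and since $f\in W^{-1,r}_0(B_2)$ by \eqref{f-est}, we may write $f=\div F$ with $F\in L^r(B_2)$ and $\norm{F}_{L^r}\le C\norm{f}_{W^{-1,r}_0}$. Thus \eqref{NSv} is equivalent to
\[
-\De v+\nb\pi=\div\bke{F-U\otimes v-v\otimes U-v\otimes v},\qquad\div v=0,\qquad v\in W^{1,r}_0(B_2).
\]
Let $S_r$ be the solution operator of the Stokes--Dirichlet problem on $B_2$; by the classical $L^r$ theory in bounded domains (e.g.~\cite{Galdi-1}) it maps $W^{-1,r}_0(B_2)$ boundedly into $W^{1,r}_0(B_2)$ with norm $\norm{S_r}\le C_{St}(r)$, where $C_{St}(r)$ stays bounded by some $\bar C_{St}$ for $r$ in the compact subinterval $[3/2,3]$ of $(1,\infty)$. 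I would then seek a fixed point in $V$ of
\[
\Phi(\bar v):=S_r\bke{f-\div\bke{U\otimes\bar v+\bar v\otimes U+\bar v\otimes\bar v}}.
\]

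The nonlinear estimates are supplied by Lemma \ref{lemma1} and \eqref{lemma1-1}. Since $U=U^b$ with $|b|\le CC_*$ (as remarked after Lemma \ref{lemma3}) and $U^b$ is $(-1)$-homogeneous, $|U(x)|\le CC_*|x|^{-1}$, hence $\norm{U}_{L^3_{wk}(B_2)}\le CC_*$. By \eqref{lemma1-1}, $\norm{U\otimes\bar v}_{L^r(B_2)}\le C_r\norm{U}_{L^3_{wk}}\norm{\bar v}_{W^{1,r}}\le CC_rC_*\norm{\bar v}_{W^{1,r}_0}$, and likewise for $\bar v\otimes U$; here $C_r$ denotes a constant coming from the Lorentz--Sobolev embedding of Lemma \ref{lemma1}(ii), which may blow up as $r\to3$. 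For the purely quadratic term one uses $r\ge3/2$: by Lemma \ref{lemma1}(ii), $W^{1,r}(B_2)\hookrightarrow L^{3r/(3-r),r}(B_2)\hookrightarrow L^{2r}(B_2)$, the last embedding holding precisely because $3r/(3-r)\ge2r\iff r\ge3/2$, so $\norm{\bar v\otimes\bar v}_{L^r}\le C\norm{\bar v}_{L^{2r}}^2\le CC_r^2\norm{\bar v}_{W^{1,r}_0}^2$. Together with $\norm{f}_{W^{-1,r}_0}\le C_1C_*$ from \eqref{f-est} this gives
\[
\norm{\Phi(\bar v)}_{W^{1,r}_0}\le\bar C_{St}\bke{C_1C_*+CC_rC_*\norm{\bar v}_{W^{1,r}_0}+CC_r^2\norm{\bar v}_{W^{1,r}_0}^2}.
\]
Now set $C_2:=2\bar C_{St}C_1$, which is independent of $r$. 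Choosing $C_*\le C_*(r)$ so small that $\bar C_{St}\bke{CC_rC_*\,C_2C_*+CC_r^2(C_2C_*)^2}\le\bar C_{St}C_1C_*$ makes $\Phi$ map $V$ into $V$; applying the same bilinear bounds to
\[
\Phi(\bar v_1)-\Phi(\bar v_2)=-S_r\div\bke{U\otimes(\bar v_1-\bar v_2)+(\bar v_1-\bar v_2)\otimes U+\bar v_1\otimes(\bar v_1-\bar v_2)+(\bar v_1-\bar v_2)\otimes\bar v_2}
\]
and using $\norm{\bar v_i}_{W^{1,r}_0}\le C_2C_*$ shows that, after possibly shrinking $C_*(r)$ further, $\Phi$ is a contraction on $V$. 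The Banach fixed point theorem then produces the unique $v\in V$ solving \eqref{NSv}; uniqueness within $V$ is automatic from uniqueness of the fixed point.

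The point requiring care — and the sole reason $C_*(r)$ must degenerate as $r\to3$ — is to keep the radius constant $C_2$ independent of $r$. This is possible because the only constant in the scheme that blows up as $r\to3$, namely the Lorentz--Sobolev constant $C_r$, enters exclusively through the products $U\otimes\bar v$, $\bar v\otimes U$, $\bar v\otimes\bar v$, i.e.~always multiplied by the small quantity $C_*$ (directly or through $\norm{U}_{L^3_{wk}}\le CC_*$), so it only tightens the smallness threshold and never enlarges the fixed point. The two quantities that actually fix $C_2$, the forcing bound $\norm{f}_{W^{-1,r}_0}\le C_1C_*$ and the Stokes constant $C_{St}(r)$, are both uniformly controlled for $r\in[3/2,3]$, the latter because this interval is compactly contained in $(1,\infty)$. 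I expect the functional-analytic skeleton (divergence form, $L^r$ Stokes solvability, bilinear Lorentz estimates, contraction mapping) to be routine; the real work is the bookkeeping of which constants are $r$-uniform and checking that the finitely many smallness requirements defining $C_*(r)$ are compatible. An alternative closer to Kim--Kozono would first solve in $W^{1,3/2}_0$ with a universal constant and then upgrade the integrability, but since $C_{St}(r)$ is already $r$-uniform on $[3/2,3]$ the direct contraction in $W^{1,r}_0$ needs no separate bootstrap.
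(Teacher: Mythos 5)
Your proposal is correct and follows essentially the same route as the paper: a contraction mapping in the ball $V$ for the Stokes system with the nonlinearity in divergence form, Galdi's $L^r$ Stokes estimates with constants uniform on compact subsets of $(1,\infty)$, and the Lorentz-space bilinear bounds of Lemma \ref{lemma1}, with $C_2$ fixed by the $r$-uniform forcing and Stokes constants and only the smallness threshold $C_*(r)$ degenerating as $r\to 3$. The only cosmetic difference is that you estimate $\bar v\otimes\bar v$ via the embedding $W^{1,r}\hookrightarrow L^{2r}$, whereas the paper uses $V\subset L^3_{wk}$ together with \eqref{lemma1-1}; both hinge on $r\ge 3/2$ in exactly the same way.
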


\begin{lemma}[Uniqueness]
\label{lem3.2} Let $1<r< 3/2$.
If both $v_1$ and $v_2$ are solutions of \eqref{NSv} and \eqref{f-est} in
$W^{1,r}_0\cap L^3_{wk}$ and $C_*+\norm{v_1}_{L^3_{wk}}
+\norm{v_2}_{L^3_{wk}}$ is sufficiently small, then $v_1 = v_2$.
\end{lemma}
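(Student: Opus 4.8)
The plan is to form the difference $w=v_1-v_2$, recast its equation as a Stokes system whose right‑hand side lies in $L^r(B_2)$ with norm bounded by a small multiple of $\norm{w}_{W^{1,r}}$, and then close the argument with the $L^r$‑theory of the Stokes Dirichlet problem.

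First I would subtract the two instances of \eqref{NSv}: the common force $f$ cancels, and the two associated pressures enter only through $\nb(\pi_1-\pi_2)$. Writing $v\cdot\nb(U+v)=v\cdot\nb U+v\cdot\nb v$ and $v_1\cdot\nb v_1-v_2\cdot\nb v_2=v_1\cdot\nb w+w\cdot\nb v_2$, I obtain that the divergence‑free field $w\in W^{1,r}_0\cap L^3_{wk}$ solves
\begin{equation*}
-\De w + U\cdot\nb w + w\cdot\nb U + v_1\cdot\nb w + w\cdot\nb v_2 + \nb(\pi_1-\pi_2) = 0 .
\end{equation*}
Since $\div U=\div v_1=\div w=0$, every first‑order term is a pure divergence, so this is the Stokes system $-\De w_i+\pd_i(\pi_1-\pi_2)=\pd_j F_{ij}$, $\div w=0$, with
\begin{equation*}
F_{ij} = -\bke{U_j w_i + U_i w_j + v_{1,j} w_i + w_j v_{2,i}} ,
\end{equation*}
and $w\in W^{1,r}_0(B_2)$.

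Next I would verify that $F\in L^r(B_2)$ with small norm — this is precisely where the hypothesis $v_1,v_2\in L^3_{wk}$ is used, and the reason the uniqueness class is $W^{1,r}_0\cap L^3_{wk}$ rather than $W^{1,r}_0$ alone. By \eqref{lemma1-1}, i.e.\ H\"older in Lorentz spaces combined with the Sobolev embedding $W^{1,r}\hookrightarrow L^{\frac{3r}{3-r},r}$ (valid for $1<r<3$),
\begin{equation*}
\norm{F}_{L^r(B_2)} \le C_r\bke{\norm{U}_{L^3_{wk}} + \norm{v_1}_{L^3_{wk}} + \norm{v_2}_{L^3_{wk}}}\norm{w}_{W^{1,r}(B_2)} .
\end{equation*}
Because $|b|\le CC_*$ (the remark after Lemma \ref{lemma3}) and $U=U^b$ is $(-1)$‑homogeneous, $\norm{U}_{L^3_{wk}}\le C\norm{rU^b}_{L^\infty}\to 0$ as $C_*\to 0$; hence the bracketed factor is $\le\delta$, with $\delta$ as small as desired once $C_*+\norm{v_1}_{L^3_{wk}}+\norm{v_2}_{L^3_{wk}}$ is small.

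Finally I would invoke the $L^r$ a priori estimate and uniqueness for the Stokes Dirichlet problem on $B_2$ (Cattabriga; see Galdi \cite{Galdi-1}), valid for every $1<r<\infty$: the only solution in $W^{1,r}_0(B_2)$ of $-\De w+\nb\rho=\div F$, $\div w=0$ obeys $\norm{w}_{W^{1,r}_0(B_2)}\le C_r\norm{F}_{L^r(B_2)}$. Combining the two displays yields $\norm{w}_{W^{1,r}_0(B_2)}\le C_r\delta\,\norm{w}_{W^{1,r}_0(B_2)}$, and taking $C_*+\norm{v_1}_{L^3_{wk}}+\norm{v_2}_{L^3_{wk}}$ small enough that $C_r\delta<1$ forces $w\equiv 0$, i.e.\ $v_1=v_2$. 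I expect the only genuine obstacle to be the middle step: for $w$ and $v_i$ merely in $W^{1,r}$ with $r<3/2$, the products $Uw$, $v_1w$, $wv_2$ are a priori only distributions, and it is exactly the extra $L^3_{wk}$ integrability — funneled through \eqref{lemma1-1} — that turns the Stokes data into a genuine $L^r$ function and makes this contraction‑type argument work; everything else is standard linear theory.
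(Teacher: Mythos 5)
Your proposal is correct and follows essentially the same route as the paper: the paper's one-line proof simply invokes the difference estimate \eqref{diff-est} (itself obtained from the $L^r$ Stokes estimate plus the Lorentz--H\"older/Sobolev bound \eqref{lemma1-1} applied to the divergence-form bilinear terms), which is exactly the computation you carry out explicitly for $w=v_1-v_2$. Your added remarks on why $\norm{U}_{L^3_{wk}}$ is small and why the $L^3_{wk}$ hypothesis is indispensable for $r<3/2$ are accurate but not a different argument.
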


Assuming the above lemmas, we get $W^{1,q}$ regularity as
follows. First we have a solution $\tilde v$ of \eqref{NSv} in
$W^{1,q}_0(B_2)$ by Lemma \ref{lem3.1}. On the other hand, both $v=\ph
w +\zeta$ and $\tilde v$ are small solutions of \eqref{NSv} in
$W^{1,r}_0\cap L^3_{wk}(B_2)$ for $r=5/4$, and thus $v=\tilde v$ by
Lemma \ref{lem3.2}. Thus $v \in W^{1,q}_0(B_2)$ and $w \in
W^{1,q}(B_1)$.

\medskip
{\bf Proof of Lemma \ref{lem3.1}.}\quad
Consider the following mapping $\Phi$: For each $v\in V$, let $\bar v
= \Phi v$ be the unique solution in $W^{1,r}_0(B_2)$ of the Stokes
system
\begin{equation}
-\De \overline{v} + \nb \bar \pi
 = f - \nb \cdot (U \otimes v
+v \otimes (U+v)),
\quad
\div \overline{v}  =0.
\end{equation}
By estimates for the Stokes system, see Galdi \cite[Ch.4]{Galdi-1}
Theorem 6.1, in particular (6.9), for $1<r<\infty$, we have
\begin{equation}
\|\bar v\|_{{W}^{1,r}_0(B_2)}
\le
N_r \|f\|_{W_0^{-1,r}}
+N_r \|\nb \cdot (U \otimes v
+ v \otimes (U+v))\|_{W_0^{-1,r}}
\end{equation}
for some constant $N_r>0$ which is uniformly bounded for $r$ in any
compact regions of $(1,\infty)$.
By \eqref{f-est} and Lemma \ref{lemma1}, in particular
\eqref{lemma1-1}, for $1<r<3$,
\begin{equation}
\begin{split}
\|\bar v\|_{{W}^{1,r}_0(B_2)} 
&\le
N_r C_1 C_*
+N_r \|U \otimes v +v \otimes (U+v) \|_{L^r}
\\
&\le N_r C_1 C_* + N_r C_r(\|U
\|_{L^3_{wk}}+\|v \|_{L^3_{wk}}) \|v\|_V.
\end{split}
\end{equation}
We now choose $C_2 = 2 (C_1+1)\sup_{3/2\le r< 3}N_r$.
Since $V \subset L^3_{wk}$ if $r \ge 3/2$, we get $\bar v = \Phi v \in
V$ if $C_*$ is sufficiently small.

We next consider the difference estimate. Let $v_1,v_2 \in V$, $\bar
v_1 = \Phi v_1$, and $\bar v_2 = \Phi v_2$.  Then
\begin{equation}
\label{diff-est}
\| \Phi v_1 - \Phi v_2 \|_{W^{1,r}}
\le
CC_r(\|U \|_{L^3_{wk}}
+\|v_1 \|_{L^3_{wk}}+\|v_2 \|_{L^3_{wk}} )\|v_1-v_2\|_{W^{1,r}}.
\end{equation}
Taking $C_*$ sufficiently small for $3/2\le r<3$, we get $\norm{\Phi
v_1 - \Phi v_2}_V \le \frac 12 \norm{v_1 - v_2}_V$, which shows that
$\Phi$ is a contraction mapping in $V$ and thus has a unique fixed
point.  We have proved the unique existence of the solution for
\eqref{NSv}--\eqref{f-est} in $V$.  \hfill $\square$

\medskip

{\it Remark.}\quad Since the constant $C_r$ from Lemma \ref{lemma1}
(ii) blows up as $r \to 3_-$, our $C_*$ shrinks to zero as $r \to
3_-$.

\medskip
{\bf Proof of Lemma \ref{lem3.2}.}\quad
By the difference estimate \eqref{diff-est}, we have
\begin{equation}
\| v_1 - v_2 \|_{W^{1,r}} \le C(\|U \|_{L^3_{wk}} +\|v_1
\|_{L^3_{wk}}+\|v_2 \|_{L^3_{wk}} )\|v_1-v_2\|_{W^{1,r}}.
\end{equation}
Thus, if $C(\|U \|_{L^3_{wk}} +\|v_1
\|_{L^3_{wk}}+\|v_2 \|_{L^3_{wk}})<1$, we conclude $v_1 = v_2$.
\hfill $\square$

\subsection{Pointwise bound}
In this subsection, we will prove pointwise bound of $w$ using
$\norm{w}_{W^{1,q}}\lec C_*$.

For any fixed $x_0 \in B_{1/2}\mz$, let $R=|x_0|/4$ and $E_k =
 B(x_0,kR)$, $k=1,2$.

Note $q^* \in (3,\infty)$.  Let $s$ be the dual exponent of $q^*$,
$1/s+1/q^* = 1$. We have
\begin{equation}
\norm{w}_{L^1(E_2)} \lec \norm{w}_{L^{q^*}(E_2)} \norm{1}_{L^{s}(E_2)}
\lec C_* R^{4-3/q}.
\end{equation}
By the interior estimate Lemma \ref{lemma2},
\begin{equation}
\norm{\nb w}_{L^{q^*}(E_1)} \lec \norm{f}_{L^{q^*}(E_2)} +
R^{-4+3/q^*}\norm{w}_{L^1(E_2)}
\end{equation}
where $f=U \otimes w + w \otimes (U+w)$. Since $|U|+|w| \lec C_*
|x|^{-1} \lec C_* R^{-1}$ in $E_2$,
\begin{equation}
\norm{f}_{L^{q^*}(E_2)} \lec C_* R^{-1} \norm{w}_{L^{q^*}(E_2)} \lec
C_*^2 R^{-1}.
\end{equation}
We also have $R^{-4+3/q^*}\norm{w}_{L^1(E_2)} \lec R^{-4+3/q^*}
C_* R^{4-3/q} = C_* R^{-1}$.
Thus
\begin{equation}
\norm{\nb w}_{L^{q^*}(E_1)} \lec C_* R^{-1}.
\end{equation}
By Gagliardo-Nirenberg inequality in $E_1$,
\begin{equation}
\norm{w}_{L^{\infty}(E_1)} \lec \norm{ w}_{L^{q^*}(E_1)}^{1-\th}
\norm{\nb w}_{L^{q^*}(E_1)}^\th + R^{-3} \norm{w}_{L^1(E_1)},
\end{equation}
where $1/\infty=(1-\th)/q^* + \th (1/q_* -1/3)$ and thus $\th= 3/q-1$.
We conclude $\norm{w}_{L^{\infty}(E_1)} \le C_* R^{-\th}$. Since $x_0$
is arbitrary, we have proved the pointwise bound, and completed the
proof of Theorem  \ref{theorem1}.

\medskip

{\it Remark.} Equivalently, one can define $v(x) = u(x_0+Rx)$, find
the equation of $v$, estimate $v$ in $L^\infty(B_1)$, and then derive
the bound for $w(x_0)$.

\subsection{Proof of Theorem \ref{theorem2}}

In this subsection we prove Theorem \ref{theorem2}. For any $x_0 \in
B_{2}\mz$, let $v(x)= \la u(\la x+x_0)$ with $\la = \min(0.1 ,
|x_0|)/2$.  By our choice of $\la$, $v$ is a very weak solution in
$B_2$ and $\norm{v}_{L^3_{wk}(B_2)} \le \e =
\norm{u}_{L^3_{wk}(B_{2.1}\mz)} $.  By \cite{Kim-Kozono}, we have
$\norm{v}_{L^\infty(B_1)} \le C_2 \e$ for some constant $C_2$ if $\e$
is sufficiently small. Thus $|u(x_0)| \le C_2\e \la^{-1}\le 40 C_2 \e
|x_0|^{-1}$.

\subsection{Proof of Corollary \ref{th1-3}}
\label{sec3.3}
In this subsection we prove Corollary \ref{th1-3}.  Suppose $u$
satisfies \eqref{u-bound} with $C_*=C_*(q)$, $q=2$, given in Theorem
\ref{theorem1}.  Let $b$ be given by \eqref{b-int},
% decided as in Theorem \ref{theorem1},
$U=U^b$ and $w=u-U$. Let $u_\la= \la u(\la x)$ be the rescaled
solution and $w_\la(x) = \la w(\la x)$. Note $U$ is scaling-invariant.
Then $u_\la = U + w_\la$ also satisfies \eqref{u-bound} with same
$C_*$. By Theorem \ref{theorem1} with $q=2$, we have the bound
\begin{equation}
|w_\la(x)| \le C C_* |x|^{-1/2}, \quad |x| <1,
\end{equation}
which is uniform in $\la$. In terms of $w$ and $y=\la x$, we get
\begin{equation}
|w(y)| \le C C_* \la^{-1} |\la^{-1} y|^{-1/2}, \quad |y| \le \la.
\end{equation}
Now fix $y$ and let $\la \to \infty$. We conclude $w \equiv 0$.

\section*{Acknowledgments}
We thank S.~Gustafson, K.~Kang, and K.~Nakanishi for fruitful
discussions.  We thank V.~Sverak for suggesting Corollary
\ref{th1-3}. We thank H.~Kim for many helpful comments. We also thank
the referees for many useful comments.
 Miura acknowledges the hospitality of the University of
British Columbia.  The research of Miura was partly supported by the
JSPS grant no.~191437.  The research of Tsai is partly supported by
Natural Sciences and Engineering Research Council of Canada, grant
no.~261356-08.

\end{document}